\DeclareMathOperator*{\argmin}{\arg\!\min}
\newtheorem{lemma}{Lemma}[section]
\newtheorem{theorem}{Theorem}[section]
\newtheorem{proposition}{Proposition}[section]
\title{On Optical Flow Models for Variational Motion Estimation}
\author{Martin Burger\thanks{Institute for Computational and Applied Mathematics and Cells in Motion Cluster of Excellence, University of M{\"u}nster, Orl\'{e}ans-Ring 10, 48149 M{\"u}nster, Germany, Email: \href{mailto:martin.burger@wwu.de}{\texttt{martin.burger@wwu.de}}}, Hendrik Dirks\thanks{Institute for Computational and Applied Mathematics and Cells in Motion Cluster of Excellence, University of M{\"u}nster, Orl\'{e}ans-Ring 10, 48149 M{\"u}nster, Germany, Email: \href{mailto:hendrik.dirks@wwu.de}{\texttt{hendrik.dirks@wwu.de}}}, Lena Frerking\thanks{Institute for Computational and Applied Mathematics and Cells in Motion Cluster of Excellence, University of M{\"u}nster, Orl\'{e}ans-Ring 10, 48149 M{\"u}nster, Germany, Email: \href{mailto:lena.frerking@wwu.de}{\texttt{lena.frerking@wwu.de}}}}
\begin{document}
	
\maketitle

\begin{abstract}
	The aim of this paper is to discuss and evaluate total variation based regularization methods for motion estimation, with particular focus on optical flow models. In addition to standard $L^2$ and $L^1$ data fidelities we give an overview of different variants of total variation regularization obtained from combination with higher order models and a unified computational optimization approach based on primal-dual methods. Moreover, we extend the models by Bregman iterations and provide an inverse problems perspective to the analysis of variational optical flow models.  
	
	A particular focus of the paper is the quantitative evaluation of motion estimation, which is a difficult and often underestimated task. We discuss several approaches for quality measures of motion estimation and apply them to compare the previously discussed regularization approaches.
\end{abstract}

\section{Introduction}
Motion estimation is a crucial task in many different areas. On the one hand, motion estimation is important in medical and biological contexts where the goal is e.g. to track moving cells or to detect the motion of organs. On the other hand, it is also important in the automotive sector. Nowadays, there are many approaches that use motion estimation to make driving saver, by detecting both dangers from the outside of a car and also inattentiveness of a driver, which expresses e.g. in slower movement or closing eyes due to tiredness. These are only two of very many further applications of motion estimation.\\
Motion estimation generally arises in the context of image sequences $u(x,t)$, depending on a spatial position $x\in\Omega \subset \mathbb{R}^d$ and a time $t\in\left[0,T\right]$. For real applications there exists only the discrete counterpart of $u$, which is a set of images recorded at time steps $t_0,t_0+\delta_t,t_0+2\delta_t,\ldots$. There also exist a variety of characteristics we have to consider when estimating motion:
\begin{itemize}
	\item A digital image can suffer from low resolution, low contrast, different gray levels and noise.
	\item The temporal resolution $\delta_t$ is strongly connected to the underlying motion. For too large time steps we might loose correspondence between consecutive images (e.g. a very fast car might only be visible in one image).
	\item A natural image often contains a set of moving objects with different speeds. A sufficient model should simultaneously be able to detect small and large movements in the same sequence. On the other hand, for the static background no motion should be detected.
	\item For many biological applications, we have to consider the fact that the illumination is constant, but fluorescence of the observed object can be inhomogeneous in space, or might even underlie changes over time.
	%skipped, referee 1
	%\item Finally, we have to consider that a camera only detects 2-dimensional images of a 3-dimensional world. Reconstructing the missing depth information from images is also a current task in image processing $\cite{shade1998layered}$.
\end{itemize}

\subsubsection{Optical Flow and Real Motion} When looking at image sequences and moving objects we directly speak of motion. This is a false implication since only projections of the real 3-dimensional motion fields are recorded by an image recording device (and in particular the human eye). \\
To emphasize this fact, we consider a camera recording traffic on a highway. The camera is only able to follow 2-dimensional paths $\boldsymbol{v}_p=(v^1,v^2)$ on the image domain $\Omega$, which is the projection of the real 3-dimensional path $\boldsymbol{v}_r=(v^1,v^2,v^3)$. Thus, already one degree of information gets lost here. This problem is even worse since we are not able to measure the 2-dimensional motion field directly. On images only the apparent motion (or \textit{optical flow}) is visible, that is displacements of intensities. Unfortunately the apparent motion and the 2-dimensional motion field are two fundamentally different properties (a detailed discussion of this problem can be found in $\cite{verri1989motion}$). The Barber's pole example is often used to underline this difference. The pole simply rotates counterclockwise, but optical devices (e.g. camera, eye) can only detect gray values tending upwards and consequently the optical flow points upwards.\\
The aim of an optical flow model is to detect the real motion only from the intensities of the given images. Performing flow estimation we have to deal with an inverse problem. We need to find the right motion that leads to the actual appearance of a specific image. 
%skipped, referee 1
%To receive this, we also consider the previous image and look for a flow field that transforms the first image into the second one.

\section{Models}

One of the most common techniques to formally link intensity variations and motion is the optical flow constraint. Based on the assumption that the image intensity $u(x,t)$ is constant along a trajectory $x(t)$ with $\frac{dx}{dt}=\boldsymbol{v}(x,t)$ we get using the chain-rule
\begin{align}
0 = \frac{du}{dt} =  \frac{\partial u}{\partial t} + \sum_{i=1}^n \frac{\partial u}{\partial x_i}\frac{dx_i}{dt} = u_t + \nabla u\cdot\boldsymbol{v}.
\label{equation:opticalFlowConstraint}
\end{align}
The last equation is generally known as the \textbf{optical flow constraint}. This optical flow constraint can also be regarded as a linear inverse problem. Therefore, we rewrite \eqref{equation:opticalFlowConstraint} by changing the positions of $\boldsymbol{v}$  and $\nabla u$ to get
\begin{align}
	(\nabla u)^T \cdot \boldsymbol{v} = -  u_t  .
\end{align}
We define $A:L^p(\Omega)^2 \rightarrow L^p(\Omega)$ via
\begin{align}
A\boldsymbol{v} = (\nabla u)^T \cdot \boldsymbol{v} &&\text{and}&& g = -u_t. \label{Adefinition}
\end{align}
Hence, the constraint fulfills the inverse problem $A\boldsymbol{v}=g$, with $p=1$ or $p=2$ depending on the norm of the data fidelity. A more detailed introduction into the optical flow problem can be found in for example in \cite{aubert2006mathematical} or \cite{becker2015optical}. We will discuss this issue in more detail in the following sections.

\subsection{Variational Models with Gradient Regularization}
\label{sec:standard}
The optical flow constraint constitutes in every point $x$ one equation, but in the context of motion estimation from images we usually have two or three spatial dimensions. Consequently, the problem is massively underdetermined. However, it is possible to estimate the motion using a variational model  
\begin{align*}
	\min_{\boldsymbol{v}} \mathcal{D}(u,\boldsymbol{v}) + \alpha \mathcal{R}(\boldsymbol{v}),
\end{align*}
where $\mathcal{D}(u,\boldsymbol{v})$ represents the so-called \textit{data term} and incorporates the optical flow constraint in a suitable norm. The second part $\mathcal{R}(\boldsymbol{v})$ models additional a-priori knowledge on $\boldsymbol{v}$ and is denoted as \textit{regularizer}. The parameter $\alpha$ regulates between data term and regularizer.\\
Possible choices for the data term are 
\begin{align*}
	\mathcal{D}_1(u,\boldsymbol{v}) := \frac{1}{2}\left\|\boldsymbol{v}\cdot\nabla u + u_t\right\|_2^2,
\end{align*}	
or
\begin{align*}
	\mathcal{D}_2(u,\boldsymbol{v}) := \left\|\boldsymbol{v}\cdot\nabla u + u_t\right\|_1.
\end{align*}	
The quadratic L$^2$ norm can be interpreted as solving the optical flow constraint in a least-squares sense inside the image domain $\Omega$. On the other hand, taking the L$^1$ norm enforces the optical flow constraint linearly and is able to handle outliers more robust. \\
The regularizer $\mathcal{R}(\boldsymbol{v})$ has to be chosen such that the a-priori knowledge is modeled in a reasonable way. If the solution is expected to be smooth, a quadratic L$^2$ norm on the gradient of $\boldsymbol{v}$ is chosen and we have
\begin{align*}
	\mathcal{R}_1(\boldsymbol{v}) := \frac{1}{2}\left\|\nabla\boldsymbol{v}\right\|_2^2.
\end{align*}
Another possible approach is to choose the \textit{total variation} (TV) of $\boldsymbol{v}$ if we expect piecewise constant parts of motion. In the finite dimensional setting this can be written as
\begin{align*}
	\mathcal{R}_2(\boldsymbol{v}) := \left\|\nabla\boldsymbol{v}\right\|_1.
\end{align*}
Taking
\begin{align*}
\mathcal{D}_1(u,\boldsymbol{v}) = \frac{1}{2}\left\|\boldsymbol{v}\cdot\nabla u + u_t\right\|_2^2 &&\text{and}&& \mathcal{R}_1(\boldsymbol{v}) = \frac{1}{2}\left\|\nabla\boldsymbol{v}\right\|_2^2
\end{align*}
results in the very well known model of Horn and Schunck \cite{horn1981determining}, 1981. With efficient primal-dual schemes to minimize L$^1$ norms \cite{chambolle2011first}, L$^1$-TV optical flow models with
\begin{align*}
\mathcal{D}_2(u,\boldsymbol{v}) = \left\|\boldsymbol{v}\cdot\nabla u + u_t\right\|_1 &&\text{and}&& \mathcal{R}_2(\boldsymbol{v}) = \left\|\nabla\boldsymbol{v}\right\|_1
\end{align*}
became very popular \cite{zach2007duality,perez2013tv}. This model was further improved by Werlberger et el \cite{werlberger2009anisotropic}, where the classical TV regularizer was replaced with a Huber norm. In this context, let us refer to a recent survey by Becker, Petra and Schn{\"o}rr \cite{becker2015optical}.

\subsection{Extension of the Regularizer}
\label{modelsExtended}
The previously described regularizations are able to produce either smooth velocity fields (L$^2$ regularization) or sharp edges (TV regularization). However, in realistic applications we often need a combination of both characteristics. To calculate velocity fields that combine smoothness and sharp edges, we need to extent the regularization term.\\
A possibility to 
In the following approach we want to make use of the potential of TV regularization to calculate sharp edges. To incorporate the possibility to get smooth parts, we need an additional primal variable $w:\mathbb{R}^2\rightarrow\mathbb{R}^2$, which will be needed for a second regularization term that allows for smooth transitions. We link the variables $\boldsymbol{v}$ and $w$ by forcing
\begin{equation*}
	\nabla \boldsymbol{v} - w \approx 0.
\end{equation*}
If we penalize this difference with an L$^1$ norm and minimize it for $\boldsymbol{v}$, we get a simple TV regularization, which is shifted by $w$. This way we ensure to keep characteristics of a TV regularization for our results. To combine this with the characteristics of another regularization technique, we apply a second regularizer to $w$ and also minimize everything for $w$. This procedure leads us to the following extended regularizer:
\begin{equation}
\label{eq:extended}
\mathcal{R}(\boldsymbol{v},w) = \alpha_0 \sum_{i=1}^d \left\| \nabla v_i - w \right\|_1 + \alpha_1 \mathcal{S} (w).
\end{equation}
The term $\mathcal{S} (w)$ is the additional term, which should be chosen in a way that it produces smooth flow fields. The weighting parameters $\alpha_0\in\mathbb{R}^+$ and $\alpha_1\in[0,1]$ on the one hand determine the relation between the regularization parts and the optical flow constraints. On the other hand they also determine the relation between the two regularization parts itself and this way adjust the proportion of smooth parts and edges in the result. If $\frac{\alpha_0}{\alpha_1}$ is large, the constant parts and edges outweigh the smooth parts; if it is small, the smooth parts outweigh the constant parts and edges. Note that a functional of the form \eqref{eq:extended} leads to a regularization of $\boldsymbol{v}$ via
\begin{equation}
 	\mathcal{\tilde R}(\boldsymbol{v} ) = \inf_w \mathcal{R}(\boldsymbol{v},w)
\end{equation}
 \\
Similar to the TV term in a standard model, we can choose between different evaluations of the norm $\left\| \nabla v_i - w \right\|_1$ in the above formulation. Since this is a shifted TV norm, we once more have the possibility to use the anisotropic variant.

\subsubsection{L$^1$-TV/L$^2$ Optical Flow Model}
As we have already seen before, a regularization with the L$^2$ norm leads to smooth approximations of a flow field. In the following approach, we use L$^2$ of $w$ as a norm for the additional regularization term $\mathcal{S} (w)$, which leads us to
\begin{equation}
\mathcal{R}(v,w) = \alpha_0 \sum_{i=1}^d \left\| \nabla v_i - w \right\|_1 +  {\frac{\alpha_1}{2}\|w\|_2^2}.
\end{equation}
Together with
\begin{align*}
\mathcal{D}(u,\boldsymbol{v}) = \left\|\boldsymbol{v}\cdot\nabla u + u_t\right\|_1
\end{align*}
we get an L$^1$-TV/L$^2$ model. This model is a combination between the previously described Horn-Schunck model and the L$^1$-TV model and thus contains characteristics of both kinds of flow fields. The regularizer of this model is very similar to the Huber function \cite{huber1964robust}, which is defined as
\begin{align*}
H_{\epsilon}(r)=\begin{cases} \frac{r^2}{2\epsilon} &0\le |r| \le \epsilon \\ |r|-\frac{\epsilon}{2} &\epsilon<|r| \end{cases}.
\end{align*}
The anisotropic variant of an optical flow model with Huber regularization has also been discussed in \cite{werlberger2009anisotropic}. For another formulation of a related regularizer see e.g. \cite{weickert2001variational} or \cite{brox2004high}.\\
Even if the Huber function can be solved directly, we want to be able to compare the different models by using the same algorithm. Therefore, later on we we will use the same the algorithm as for the other approaches.

\subsubsection{L$^1$-TV/TV Optical Flow Model}
Another possibility to consider smoothness in a flow estimation is to incorporate a higher order derivative. In the setting of \eqref{eq:extended}, we can choose $\mathcal{S}(w)$ as TV of $w$, which leads us to
\begin{equation}
\mathcal{R}(v,w) = \alpha_0 \sum_{i=1}^d \left\| \nabla v_i - w \right\|_1 + \alpha_1 {\|\nabla w\|_1}.
\end{equation}
Since the first term of the regularization leads to $w$ being close to the gradient of $w$, the last term considers a term, which is close to the second gradient of $v$. For the additional TV term in this model, we can again choose between the isotropic and the anisotropic variant, i.e. equivalently to those in Section \ref{sec:standard}. Overall, this leads to a huge variety of possible evaluations for this model.\\
By using
\begin{align*}
\mathcal{D}(u,\boldsymbol{v}) = \left\|\boldsymbol{v}\cdot\nabla u + u_t\right\|_1
\end{align*}
as a data term we get an L$^1$-TV/TV model, which we will later use for the numerical realization.\\
The regularizer we get with the described approach is a primal realization of a specific case of a Total Generalized Variation (TGV) regularizer \cite{bredies2010total}. See \cite{ranftl2012pushing,ranftl2014non} for further analysis of the TGV regularizer applied to motion estimation models.

\subsection{Bregman Iterations}
A well-known drawback of TV regularization in combination with L$^2$ data fidelity is the loss of contrast, which means that the difference between intensities in the result is lowered. However, edges are well recovered. Concerning optical flow, this shows up in a reduced velocity of motion. A possibility to overcome this drawback is to apply contrast-enhancing Bregman iterations \cite{obgxy}. For the case of higher order regularization term additional systematic bias, e.g. related to slopes, is corrected in the same way (cf. \cite{benning2013higher}). For this sake, we need the Bregman Distance of the regularizer $\mathcal{R}$, which is defined as
\begin{align*}
	D_\mathcal{R}^{\boldsymbol{b}}(\boldsymbol{v},\boldsymbol{v}^*) = \mathcal{R}(\boldsymbol{v}) - \mathcal{R}(\boldsymbol{v}^*) - \langle\boldsymbol{b},\boldsymbol{v}-\boldsymbol{v}^*\rangle, \quad \boldsymbol{b} \in \partial \mathcal{R}(\boldsymbol{v}^*),
\end{align*}
for $\boldsymbol{v},\boldsymbol{v}^* \in dom({\mathcal{R}})$. This distance replaces the former regularization term $\mathcal{R}$ in the respective models. This way it is possible to re-enhance the contrast. Thanks to the condition that $\boldsymbol{b}$ is in the subgradient of $\mathcal{R}(\boldsymbol{v}^*)$, it is ensured, that the edges stay at the correct positions. The Bregman iteration iteratively computes
$	\boldsymbol{v}^{n+1}$ as a solution of
\begin{equation}
\min_{\boldsymbol{v}} \mathcal{D}(u,\boldsymbol{v}) + \alpha D_\mathcal{R}^{\boldsymbol{b}^n}(\boldsymbol{v},\boldsymbol{v}^n).
\label{bregman}
\end{equation}
Afterwards, the Bregman variable is updated by setting 
$$\boldsymbol{b}^{n+1} = \boldsymbol{b}^n - \frac{1}\alpha (u_t + \nabla u\cdot \boldsymbol{v}^n)\nabla u.$$
Applying Bregman iterations to the optical flow problem with L$^2$ data fidelity can equivalently  adding the Bregman variable $\boldsymbol{b}^n$ to the data. For the n-th Bregman iteration we can use the extended data term $\Vert u_t + \nabla u\cdot \boldsymbol{v}\Vert^2_2 - \alpha\left\langle  \boldsymbol{b}^n,\boldsymbol{v}\right\rangle$, which results in an additive constant for the optimality condition.  So far Bregman iterations have not been investigated as an iterative regularization method for optical flow. A previous investigation \cite{hoeltgenbregman} was only considering (split) Bregman methods as a minimization method for the original variational model, not investigating the possibility to reduce systematic bias. As we shall see below (see Figure \ref{figureResults2}) the Bregman iteration can yield improvements in parts where other variational methods underestimate motion (see e.g. the right block in the rubber whale data set below) and produce competitive results (see Table \ref{parametersAndRanks}). It might thus nicely complement other approaches.
	
In the case of an $L^1$ data fidelity, the simple modification of the data term is not possible and the iteration needs to be carried out via the original form (\ref{bregman}). Due to the additional linear term $\langle \boldsymbol{b}^n, \boldsymbol{v}\rangle$ in comparison to the other only linear terms in the functional, it is not guaranteed that the functional is coercive, hence the well-posedness of the iteration is not guaranteed in the infinite-dimensional setting. In any case, it is not sure that the Bregman iteration yields additional benefit, since the L$^1$ data fidelity does not introduce systematic bias.  We leave this issue to future research.

\section{Analysis}

In the following we briefly comment on some aspects in the analysis of variational problems for optical flow, with special focus on particular aspects related to the recent extensions of total variation. We are able to generalize existing results for the Horn-Schunck and $L^2$-TV to other $L^p$ data terms, focusing in particular on $p=1$. This can be done under the same conditions on the image gradient as previously used, but the proof needs some adaption to take care of the constant functions in the null-space of the gradient. The coercivity related to the latter needs a different proof compared to previous results (cf. \cite{schnoerr}), while on the other hand we give a more concise proof as in \cite{hinterberger2002analysis} and avoid too strong regularity conditions. The understanding of the basic structure of the proof then immediately allows a generalization to existence results for the different recent variants of total variation regularization, whose analysis has not been studied in the optical flow setting. Finally, we will discuss another problem hardly studied in the optical flow literature previously, namely the quantitative estimation of errors in the motion estimate when changing image data.

\subsection{Existence of Minimizers}

The basic issue of existence of minimizers for optical flow problems has been considered by various authors (cf. e.g. \cite{aubert2006mathematical,hinterberger2002analysis}) using methods of calculus of variations. With the assumption $u_t \in L^2(\Omega)$ and $\nabla u \in L^\infty(\Omega)$ it is straightforward to see that the operator $A$ and data $g$ defined in (\ref{Adefinition}) are bounded for $p=1,2$ and with standard convexity arguments lower semicontinuity of the data fidelities follow (for $p=1$ assumptions can even be relaxed). The associated data terms can all be shown to be L$^2$-coercive on subspaces of $H^1(\Omega)^2$ in the case of the Horn-Schunck model and of $BV(\Omega)$ for the other models. The subspaces are given by functions with mean value zero for standard models, while they are given by functions of vanishing mean and first moments in the case of the TV-TV and TV-L$^2$ regularizer. Hence, in order to obtain coercivity and thus existence of minimizers it is a natural question whether the data term yields coercivity on the whole space, i.e. boundedness of the data fidelity further implies bounds on the mean (and potentially the first moments) of $\boldsymbol{v}$. Obviously this cannot be true without further assumptions on $\nabla u$ (which is seen immediately for constant $u$). In the case of the Horn-Schunck model this question was answered in \cite{schnoerr}, who showed coercivity for the original Horn-Schunck model (see also \cite{lorenzURL} for a detailed discussion). His proof heavily uses the quadratic structure of the data fidelity as well as the fact that functions in the kernel of the regularization term are constant, hence it can be generalized to the L$^2$-TV model, but neither to extensions of the TV regularization nor to L$^1$-fidelities. However, the result can easily be generalized as we show in the following:

\begin{lemma} \label{lemma1}
Let $\Omega \subset \mathbb{R}^d$, $d \geq 1$ arbitrary, be a bounded domain and let $(\boldsymbol{v}^n)$ be a sequence in $L^p(\Omega)$, $1 \leq p \leq \infty$. Let $u_t \in L^p(\Omega)$ and $\nabla u \in L^\infty(\Omega)^2$ such that the functions $\partial_{x_i} u$ are linearly independent. If there exists a sequence of vectors $c^n \in \mathbb{R}^d$ such that $c^n + \boldsymbol{v}^n$ is bounded in $L^p(\Omega)$ and 
$$ \Vert u_t + \nabla u \cdot \boldsymbol{v}^n \Vert_{L^p}\text{ is bounded in } \mathbb{R}^+,$$
then $\boldsymbol{v}^n$ is bounded in $L^p(\Omega)$
\end{lemma}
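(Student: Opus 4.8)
The plan is to reduce everything to a finite-dimensional statement about the constant vectors $c^n$. Write $\boldsymbol{w}^n := c^n + \boldsymbol{v}^n$, which is bounded in $L^p(\Omega)^d$ by assumption. Since $\boldsymbol{v}^n = \boldsymbol{w}^n - c^n$ and on a bounded domain a constant vector $c^n$ has $\|c^n\|_{L^p} = |\Omega|^{1/p}\,|c^n|$ (with the obvious modification for $p=\infty$), it suffices to show that the sequence $(c^n)$ is bounded in $\mathbb{R}^d$; once this is established, the triangle inequality immediately gives boundedness of $\boldsymbol{v}^n$ in $L^p$.

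First I would extract boundedness of the combination $\nabla u \cdot c^n$ in $L^p$. Inserting $\boldsymbol{v}^n = \boldsymbol{w}^n - c^n$ into the optical flow residual yields
\begin{equation*}
u_t + \nabla u \cdot \boldsymbol{v}^n = \bigl(u_t + \nabla u \cdot \boldsymbol{w}^n\bigr) - \nabla u \cdot c^n.
\end{equation*}
The first term on the right is bounded in $L^p$, because $u_t \in L^p(\Omega)$, $\nabla u \in L^\infty(\Omega)^d$ and $\boldsymbol{w}^n$ is bounded in $L^p$, so that $\|\nabla u \cdot \boldsymbol{w}^n\|_{L^p} \le \|\nabla u\|_{L^\infty}\,\|\boldsymbol{w}^n\|_{L^p}$. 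The left-hand side is bounded by hypothesis. Subtracting, I conclude that $\nabla u \cdot c^n = \sum_{i=1}^d c_i^n\,\partial_{x_i} u$ is bounded in $L^p(\Omega)$.

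The heart of the argument is then a finite-dimensional coercivity estimate. Since $\Omega$ is bounded, each $\partial_{x_i} u \in L^\infty(\Omega) \subset L^p(\Omega)$, and by hypothesis these $d$ functions are linearly independent in $L^p$. Hence the linear map $c \mapsto \nabla u \cdot c = \sum_{i=1}^d c_i\,\partial_{x_i} u$ is injective from $\mathbb{R}^d$ onto the finite-dimensional subspace $V := \operatorname{span}\{\partial_{x_1} u, \dots, \partial_{x_d} u\} \subset L^p(\Omega)$. On the finite-dimensional space $V$ all norms are equivalent, so pulling back the $L^p$-norm along this isomorphism produces a constant $C > 0$, depending only on $u$, $p$ and $\Omega$, with
\begin{equation*}
\Bigl\| \sum_{i=1}^d c_i\,\partial_{x_i} u \Bigr\|_{L^p} \ge C\,|c| \qquad \text{for all } c \in \mathbb{R}^d.
\end{equation*}
Applying this to $c = c^n$ and using the boundedness of $\nabla u \cdot c^n$ established above shows that $(c^n)$ is bounded in $\mathbb{R}^d$, which completes the argument.

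I expect the only genuine obstacle to be the coercivity bound of the last paragraph: it is exactly here that the linear independence of the $\partial_{x_i} u$ enters, ruling out the degenerate cases (e.g.\ $u$ constant) in which the mean of $\boldsymbol{v}^n$ cannot be controlled by the data residual. The remaining steps are routine manipulations using the triangle inequality and the boundedness of $\Omega$. A minor point still to verify is the case $p=\infty$, where the relation between $\|c^n\|_{L^\infty}$ and $|c^n|$ and the norm equivalence on $V$ both continue to hold verbatim.
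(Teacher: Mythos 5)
Your proposal is correct, and its first two steps coincide exactly with the paper's proof: you reduce the claim to boundedness of the constants $c^n$ in $\mathbb{R}^d$, and you extract boundedness of $\nabla u \cdot c^n$ in $L^p(\Omega)$ from the reverse triangle inequality applied to the decomposition $u_t + \nabla u \cdot \boldsymbol{v}^n = \bigl(u_t + \nabla u \cdot (c^n + \boldsymbol{v}^n)\bigr) - \nabla u \cdot c^n$. Where you diverge is the final coercivity step. The paper argues by contradiction: assuming $|c^n| \to \infty$, it normalizes, extracts a convergent subsequence of $c^n/|c^n|$ with limit $d$, $|d|=1$, and uses continuity of the norm to conclude $\Vert \nabla u \cdot d \Vert_{L^p} = 0$, contradicting linear independence. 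You instead observe that $c \mapsto \Vert \nabla u \cdot c \Vert_{L^p}$ is a norm on $\mathbb{R}^d$ (positive definiteness being precisely the linear independence hypothesis) and invoke equivalence of norms in finite dimensions to obtain the quantitative bound $\Vert \nabla u \cdot c \Vert_{L^p} \geq C\,|c|$. The two arguments rest on the same underlying fact --- compactness of the unit sphere in $\mathbb{R}^d$, which is how norm equivalence is proved --- so this is a repackaging rather than a new idea; what your version buys is an explicit coercivity constant $C$ and no subsequence bookkeeping, while the paper's version is self-contained and, as the paper notes, generalizes verbatim to the affine case $c^n + A^n x$ of its second lemma (your norm-equivalence argument also extends there, applied to the space of affine maps, since linear independence of $\{\partial_{x_i}u,\, x_j\partial_{x_i}u\}$ again gives positive definiteness). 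Your closing remarks on $p=\infty$ are also fine: both the identity $\Vert c \Vert_{L^\infty} = |c|$ and the norm equivalence hold without change.
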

\begin{proof} 
Under the above assumptions it obviously suffices to prove that the sequence $c^n$ is bounded in $\mathbb{R}^d$.We have 
\begin{align*}
 \Vert u_t + \nabla u \cdot \boldsymbol{v}^n \Vert_{L^p} &= \Vert u_t + \nabla u \cdot(c^n + \boldsymbol{v}^n) -
\nabla u \cdot c^n\Vert_{L^p} \\ &\geq \Vert \nabla u \cdot c^n\Vert_{L^p}- \Vert u_t + \nabla u \cdot(c^n + \boldsymbol{v}^n  )\Vert_{L^p}.
\end{align*}
The boundedness of $ \Vert u_t + \nabla u \cdot \boldsymbol{v}^n \Vert_{L^p} $ of $c^n + \boldsymbol{v}^n$ imply that 
$\nabla u \cdot c^n$ is bounded in $L^p(\Omega)$. Now assume that $c^n$ is unbounded, then there exists a subsequence (again denoted by $c^n$ with $|c^n|$ growing to infinity in a monotone way. Then $\frac{c^n}{|c^n|}$ is well-defined and bounded, moreover $\nabla u \cdot \frac{c^n}{|c^n|}$ tends to zero in $L^p(\Omega)$. Hence,  $\frac{c^n}{|c^n|}$ has a convergent subsequence $d^n$ with limit $d$ satisfying $|d|=1$, in particular $d \neq 0$. By continuity of the norm we find
$$ \Vert \nabla u \cdot d \Vert_{L^p} = 0, $$
which contradicts the linear independence of the functions $\partial_{x_i} u$ and thus yields the assertion.
\end{proof}

With the Poincare-Wirtinger inequality and the embedding of $BV(\Omega)$ into $L^2(\Omega)$ for $d \leq 2$ and $L^1(\Omega)$ in arbitrary dimension, we obtain the coercivity of the total variation on functions of mean value zero. Now we choose $c^n$ equal to the mean value of $\boldsymbol{v^n}$ and obtain coercivity of the respective L$^p$-TV model in $BV(\Omega)$. With a standard (weak) lower semicontinuity argument for the convex functionals we thus deduce the following existence result:
\begin{theorem}
Let $\Omega \subset \mathbb{R}^d$ be a bounded domain and let either $p=1$ or $p=2$. Let $u_t \in L^p(\Omega)$ and $\nabla u \in L^\infty(\Omega)^d$ such that the functions $\partial_{x_i} u$ are linearly independent. Then there exists a minimizer $\boldsymbol{v} \in BV(\Omega)^d$ for the L$^p$-TV model in the cases $p=1$ for arbitrary $d$ and $p=2$ for $d \leq 2$.
\end{theorem}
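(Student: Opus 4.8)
The plan is to apply the direct method of the calculus of variations to the functional $J(\boldsymbol{v}) = \mathcal{D}(u,\boldsymbol{v}) + \alpha \|\nabla \boldsymbol{v}\|_1$, where $\mathcal{D}$ is the $L^p$ data fidelity and $\|\nabla \boldsymbol{v}\|_1$ the total variation. First I would observe that $J$ is nonnegative and finite at $\boldsymbol{v} = 0$, since $u_t \in L^p(\Omega)$, so that $m := \inf_{\boldsymbol{v}} J(\boldsymbol{v})$ is a finite nonnegative number and a minimizing sequence $(\boldsymbol{v}^n)$ exists. Along this sequence $J(\boldsymbol{v}^n)$ is bounded, and since both summands are nonnegative, the data fidelity $\Vert u_t + \nabla u \cdot \boldsymbol{v}^n \Vert_{L^p}$ and the total variation $\|\nabla \boldsymbol{v}^n\|_1$ are separately bounded.

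The central step is coercivity, i.e.\ showing that $(\boldsymbol{v}^n)$ is bounded in $BV(\Omega)^d$. By the Poincar\'e--Wirtinger inequality together with the embedding of $BV(\Omega)$ into $L^p(\Omega)$ --- valid for $p=2$ when $d \le 2$ and for $p=1$ in every dimension, which matches exactly the case distinction in the statement --- the zero-mean part satisfies $\|\boldsymbol{v}^n - \bar{\boldsymbol{v}}^n\|_{L^p} \le C\,\|\nabla \boldsymbol{v}^n\|_1$ and is therefore bounded. Choosing $c^n = -\bar{\boldsymbol{v}}^n$ equal to the negative mean value of $\boldsymbol{v}^n$ makes $c^n + \boldsymbol{v}^n$ bounded in $L^p$, and since the data fidelity is bounded by the previous step, Lemma \ref{lemma1} applies and yields boundedness of $(\boldsymbol{v}^n)$ in $L^p$. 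Combined with the uniform bound on the total variation, this gives a uniform bound in $BV(\Omega)^d$.

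With the $BV$ bound in hand I would invoke compactness of $BV$: up to a subsequence, $\boldsymbol{v}^n \rightharpoonup^* \boldsymbol{v}$ weakly-$*$ in $BV(\Omega)^d$, with strong convergence in $L^1(\Omega)^d$. It then remains to check weak-$*$ lower semicontinuity of the two parts of $J$. The total variation is weakly-$*$ lower semicontinuous by its dual definition. For the data term one distinguishes the two cases: for $p=1$, strong $L^1$ convergence and boundedness of the linear operator $A\boldsymbol{v} = \nabla u \cdot \boldsymbol{v}$ (recall $\nabla u \in L^\infty$) give $A\boldsymbol{v}^n \to A\boldsymbol{v}$ in $L^1$, hence even continuity of $\mathcal{D}$; for $p=2$ and $d \le 2$, the $L^2$-bounded sequence converges weakly in $L^2$, so $A\boldsymbol{v}^n \rightharpoonup A\boldsymbol{v}$ weakly in $L^2$ and the convex continuous functional $\tfrac12\Vert A\,\cdot\, - g\Vert_2^2$ is weakly lower semicontinuous. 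Passing to the $\liminf$ yields $J(\boldsymbol{v}) \le \liminf_n J(\boldsymbol{v}^n) = m$, so $\boldsymbol{v}$ is the desired minimizer.

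The main obstacle is the coercivity step, and more precisely the control of the mean value $\bar{\boldsymbol{v}}^n$: the total variation controls only the oscillatory part of $\boldsymbol{v}^n$ and is completely blind to its constant component, while the data term is generally not coercive on constants without structural assumptions on $\nabla u$. This is exactly the gap closed by the linear independence of the functions $\partial_{x_i} u$, and it is the reason Lemma \ref{lemma1} is isolated and proved separately; once it is available, the remaining compactness and lower-semicontinuity arguments are standard, with the dimension restriction $d \le 2$ for $p=2$ entering only through the $BV$--$L^2$ embedding used in the coercivity estimate.
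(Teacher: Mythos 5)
Your proof is correct and follows essentially the same route as the paper: coercivity via the Poincar\'e--Wirtinger inequality, the $BV$ embedding into $L^p$ (which is exactly where the restriction $d\le 2$ for $p=2$ enters), and Lemma \ref{lemma1} applied with $c^n$ the (negative) mean value of $\boldsymbol{v}^n$, followed by standard weak-$*$ compactness and lower semicontinuity. You merely spell out in detail what the paper compresses into the paragraph preceding the theorem, and your sign convention for $c^n$ is in fact the more careful reading of Lemma \ref{lemma1}.
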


\begin{lemma}  
Let $\Omega \subset \mathbb{R}^d$, $d \geq 1$ arbitrary, be a bounded domain and let $(\boldsymbol{v}^n)$ be a sequence in $L^p(\Omega)$, $1 \leq p \leq \infty$. Let $u_t \in L^p(\Omega)$ and $\nabla u \in L^\infty(\Omega)^2$ such that the functions $\{\partial_{x_i} u,x_j \partial_{x_i} u\}_{i,j=1,\ldots,d}$ are linearly independent. If there exists a sequence of vectors $c^n \in \mathbb{R}^d$ and matrices $A^n \in \mathbb{R}^{d \times d}$such that $c^n + A^n x + \boldsymbol{v}^n$ is bounded in $L^p(\Omega)$ and 
$$ \Vert u_t + \nabla u \cdot \boldsymbol{v}^n \Vert_{L^p}\text{ is bounded in } \mathbb{R}^+,$$
then $\boldsymbol{v}^n$ is bounded in $L^p(\Omega)$
\end{lemma}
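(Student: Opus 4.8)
The plan is to reproduce the argument of Lemma \ref{lemma1} almost verbatim, the only genuine change being that the finite-dimensional parameter whose boundedness we must control now lives in $\mathbb{R}^d \times \mathbb{R}^{d\times d}$ rather than in $\mathbb{R}^d$. As before, since $c^n + A^n x + \boldsymbol{v}^n$ is bounded in $L^p(\Omega)$ and $\Omega$ is a bounded domain (so that the coordinate functions $x_j$ and the constants lie in $L^p(\Omega)$ for every $p$), it suffices to prove that the pair $(c^n, A^n)$ is bounded. Indeed, from this the decomposition $\boldsymbol{v}^n = (c^n + A^n x + \boldsymbol{v}^n) - c^n - A^n x$ gives the boundedness of $\boldsymbol{v}^n$, because each subtracted term is then bounded in $L^p$.

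First I would insert the splitting $u_t + \nabla u \cdot \boldsymbol{v}^n = u_t + \nabla u\cdot(c^n + A^n x + \boldsymbol{v}^n) - \nabla u \cdot (c^n + A^n x)$ and apply the reverse triangle inequality exactly as in Lemma \ref{lemma1}. The term $\nabla u\cdot(c^n + A^n x + \boldsymbol{v}^n)$ is bounded in $L^p$ since $\nabla u \in L^\infty$ and its argument is bounded in $L^p$; combined with the hypothesised bound on $\Vert u_t + \nabla u\cdot \boldsymbol{v}^n \Vert_{L^p}$ this forces $\nabla u \cdot (c^n + A^n x)$ to be bounded in $L^p(\Omega)$. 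The key observation is then that this quantity is precisely a linear combination of the functions appearing in the independence hypothesis: expanding gives $\nabla u \cdot (c^n + A^n x) = \sum_i c^n_i\, \partial_{x_i} u + \sum_{i,j} A^n_{ij}\, x_j\, \partial_{x_i} u$, so that the coefficients are exactly the entries of $c^n$ and $A^n$, and the functions multiplying them are exactly the members of the family $\{\partial_{x_i} u, x_j \partial_{x_i} u\}$.

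Finally I would argue by contradiction as in the earlier lemma: if $(c^n, A^n)$ were unbounded, pass to a subsequence along which $|(c^n, A^n)| \to \infty$ monotonically, normalise, and use compactness of the unit sphere in $\mathbb{R}^{d+d^2}$ to extract a limit $(c, A)$ with $|(c,A)| = 1$, hence $(c,A) \neq 0$. Dividing the bounded quantity $\nabla u \cdot (c^n + A^n x)$ by the diverging norm shows it tends to zero after normalisation, and continuity of the finite-dimensional linear map sending coefficients to the corresponding element of $L^p(\Omega)$ yields $\Vert \sum_i c_i \partial_{x_i} u + \sum_{i,j} A_{ij} x_j \partial_{x_i} u \Vert_{L^p} = 0$, contradicting the assumed linear independence of $\{\partial_{x_i} u, x_j \partial_{x_i} u\}$. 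I expect no serious obstacle here, as the proof is structurally identical to that of Lemma \ref{lemma1}; the only points requiring a moment's care are the algebraic expansion identifying the entries of $(c^n, A^n)$ with the weights of the enlarged independent family, and the harmless remark that on a bounded domain the monomials $x_j$ belong to every $L^p(\Omega)$, so that the affine correction $A^n x$ is genuinely $L^p$-admissible.
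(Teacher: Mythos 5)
Your proposal is correct and takes essentially the same route as the paper's proof: the identical splitting $u_t + \nabla u\cdot(c^n + A^n x + \boldsymbol{v}^n) - \nabla u\cdot(c^n + A^n x)$ with the reverse triangle inequality to bound $\nabla u\cdot(c^n + A^n x)$ in $L^p$, followed by the same normalisation-and-compactness contradiction against the linear independence of $\{\partial_{x_i} u,\, x_j \partial_{x_i} u\}$. The only difference is that you spell out the limiting procedure (joint normalisation of $(c^n,A^n)$ on the unit sphere of $\mathbb{R}^{d+d^2}$ and the expansion identifying the entries of $(c^n,A^n)$ as coefficients of the independent family), which the paper merely cites as ``a limiting procedure as in the proof of Lemma \ref{lemma1}.''
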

\begin{proof} 
Under the above assumptions it obviously suffices to prove that the sequence $c^n$ is bounded in $\mathbb{R}^d$ and the
sequence $A^n$ is bounded in $\mathbb{R}^{d \times d}$. We have 
\begin{align*}
 \Vert u_t + \nabla u \cdot \boldsymbol{v}^n \Vert_{L^p} &= \Vert u_t + \nabla u \cdot(c^n + A^n x+ \boldsymbol{v}^n) -
\nabla u \cdot c^n\Vert_{L^p} \\ &\geq \Vert \nabla u \cdot c^n\Vert_{L^p}- \Vert u_t + \nabla u \cdot(c^n +  A^n x+ \boldsymbol{v}^n)  \Vert_{L^p}.
\end{align*}
The boundedness of $ \Vert u_t + \nabla u \cdot \boldsymbol{v}^n \Vert_{L^p} $ of $c^n + A^n x + \boldsymbol{v}^n$ imply that 
$\nabla u \cdot (c^n+A^nx)$ is bounded in $L^p(\Omega)$. A limiting procedure as in the proof of Lemma \ref{lemma1} yields a nontrivial limit $d$ respectively $B$ such that
$$ \Vert \nabla u \cdot (d+BX) \Vert_{L^p} = 0, $$
which contradicts the linear independence assumption and thus yields the assertion.
\end{proof}

Using the coercivity of the effective functionals $\mathcal{\tilde  R}$ on the space of BV-functions with vanishing mean and first moments (cf. \cite{benning2013higher} in the TV/TV case and \cite{burger2015infimal} in the TV/L$^2$ case) and a similar proof as above we obtain the following result:
\begin{theorem}
Let $\Omega \subset \mathbb{R}^d$ be a bounded domain and let either $p=1$ or $p=2$. Let $u_t \in L^p(\Omega)$ and $\nabla u \in L^\infty(\Omega)^d$ such that the functions $\{\partial_{x_i} u,x_j \partial_{x_i} u\}_{i,j=1,\ldots,d}$ are linearly independent. Then there exists a minimizer $\boldsymbol{v} \in BV(\Omega)^d$ for the L$^p$-TV/TV model and the L$^p$-TV/L$^2$ model in the cases $p=1$ for arbitrary $d$ and $p=2$ for $d \leq 2$.
\end{theorem}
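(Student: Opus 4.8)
The plan is to apply the direct method of the calculus of variations, following the structure used above for the L$^p$-TV model but accounting for the larger null-spaces of the extended regularizers. I would work with the joint functional $J(\boldsymbol{v},w) = \mathcal{D}(u,\boldsymbol{v}) + \alpha_0 \sum_{i=1}^d \|\nabla v_i - w\|_1 + \alpha_1 \mathcal{S}(w)$, so that lower semicontinuity need not be argued through the infimum defining $\tilde{\mathcal{R}}$. Taking a minimizing sequence $(\boldsymbol{v}^n,w^n)$ and using that all three terms are nonnegative, I obtain that the data residual $\|u_t + \nabla u\cdot\boldsymbol{v}^n\|_{L^p}$, the coupling term $\sum_i\|\nabla v_i^n - w^n\|_1$, and $\mathcal{S}(w^n)$ are bounded.

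Next I would extract $BV$-bounds from the regularizer modulo its kernel. In the L$^p$-TV/L$^2$ case, boundedness of $\|w^n\|_2$ from $\mathcal{S}$ together with the coupling term immediately bounds $\|\nabla v_i^n\|_1$, i.e. the total variation of $\boldsymbol{v}^n$, and the relevant kernel is just the constants. In the L$^p$-TV/TV case the kernel is the affine functions, and the cited coercivity of the TGV-type functional (cf. \cite{benning2013higher}) bounds $\boldsymbol{v}^n$ modulo affine functions and $w^n$ modulo constants. In either case the component of $\boldsymbol{v}^n$ in a fixed complement of the kernel is bounded in $BV(\Omega)^d$, hence in $L^p$ through the embedding $BV \hookrightarrow L^{d/(d-1)}$, which is admissible precisely for $d\le 2$ when $p=2$ and for all $d$ when $p=1$, explaining the dimensional restriction.

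To control the kernel component I would invoke the coercivity of the data term. Writing $\boldsymbol{v}^n = r^n + (c^n + A^n x)$ with $r^n$ in the complement of the kernel, the bound on $r^n$ and on the data residual let me apply the second lemma above (for the affine kernel of the TV/TV model) or Lemma \ref{lemma1} (for the constant kernel of the TV/L$^2$ model); the assumed linear independence of $\{\partial_{x_i}u, x_j\partial_{x_i}u\}$ then forces $c^n$ and $A^n$ to be bounded. Combining this with the $BV$-bound on $r^n$ shows that $(\boldsymbol{v}^n)$ is bounded in $BV(\Omega)^d$, and the cited coercivity together with this bound also controls $(w^n)$; weak-$*$ compactness of $BV$ (resp. weak compactness in $L^2$ for $w$ in the TV/L$^2$ case) then yields a subsequence with $\boldsymbol{v}^n \rightharpoonup^* \boldsymbol{v}$ and $w^n \rightharpoonup^* w$.

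Finally I would pass to the limit. The data term is convex and, since $A$ is bounded and linear with $\nabla u \in L^\infty$, weakly lower semicontinuous; the coupling term and $\mathcal{S}$ are convex and weak-$*$ lower semicontinuous on $BV$ (resp. weakly lsc on $L^2$), so $J(\boldsymbol{v},w) \le \liminf J(\boldsymbol{v}^n,w^n) = \inf J$, proving that $(\boldsymbol{v},w)$, and hence $\boldsymbol{v}$, is a minimizer. I expect the main obstacle to be the coercivity in the two middle steps: the effective regularizer does not control the full $BV$-norm, since the TV/TV functional leaves affine slopes unpenalized, so the bound on the kernel component must be recovered entirely from the data term, and matching the correct kernel (constants versus affine) with the correct lemma and linear-independence condition is the delicate point. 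Working with the joint formulation is what keeps the lower-semicontinuity part routine.
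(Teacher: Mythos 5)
Your proposal is correct and follows essentially the same route as the paper: the direct method, with coercivity of the effective regularizers modulo their kernels taken from the same references (\cite{benning2013higher} for TV/TV, \cite{burger2015infimal} for TV/L$^2$), the data-term lemmas to bound the kernel component via the assumed linear independence, and convex lower semicontinuity to pass to the limit; working with the joint functional in $(\boldsymbol{v},w)$ instead of $\tilde{\mathcal{R}}$ is an equivalent packaging that, as you note, keeps the lower-semicontinuity step routine. The one point where you genuinely diverge is the L$^p$-TV/L$^2$ case: the paper treats both extended models uniformly, invoking coercivity on BV functions of vanishing mean \emph{and} first moments together with the second (affine-kernel) lemma, whereas you observe that the penalty $\frac{\alpha_1}{2}\|w\|_2^2$ excludes nonconstant affine functions from the kernel — boundedness of $\|w^n\|_2$ on the bounded domain plus the coupling term already bounds the total variation of $\boldsymbol{v}^n$ — so that Lemma \ref{lemma1} and linear independence of the $\partial_{x_i}u$ alone suffice for that model. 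This is a mild sharpening (the TV/L$^2$ model needs only the weaker independence hypothesis), and it is consistent with the paper's argument, which remains valid under the stronger assumption stated in the theorem.
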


A major restriction of the existing existence analysis is the fact that some regularity of $u_t$ and $\nabla u$ needs to be assumed, which might not be met by solutions of transport equations, in particular for noisy versions of the images (note however that many results in the literature assume even stronger regularity, e.g. images of class $C^2$ as in \cite{hinterberger2002analysis}). A detailed analysis of such issues remains a relevant question for future work. The case of stochastic noise in the image data might even enforce different approaches, e.g. Bayesian models (cf. \cite{suhr2014registration}).

Let us finally mention that uniqueness of minimizers cannot be expected for total variation models, since neither the regularization nor the data term are strictly convex (due to the nullspace of the map $\boldsymbol{v} \mapsto \nabla u \cdot \boldsymbol{v}$). In the case of an L$^2$ data fidelity one can at least derive uniqueness of the component $\nabla u \cdot \boldsymbol{v}$ normal to the image level sets.

\subsection{Quantitative Estimates}

Quantitative estimates have hardly been investigated for optical flow models in the past, which is probably due to the nonuniqueness of the flow reconstruction. Hence, an estimate to a ground truth seems out of reach in most cases. Nonetheless, it seems interesting to take a look at the optical flow problem from the perspective of error estimation for inverse problems (cf. \cite{burger2004convergence}). Let us consider the case of L$^2$ data fidelities for simplicity, we start from the optimality condition
\begin{equation}
	(u_t + \nabla u \cdot \boldsymbol{v}) \nabla u + \alpha \boldsymbol{b} = 0, \qquad \partial \boldsymbol{b} \in \partial \mathcal{R}(\boldsymbol{v}).
\end{equation}
The obvious  first question to ask is the robustness of solutions for errors in the data $u$, a second question is related to the behaviour as $\alpha$ tends to zero. Let us first sketch the derivation of quantitative error estimates in the case of data perturbations, for this sake let $\boldsymbol{\tilde v}$ be the solution of the variational model with data $\tilde u_t$ and $\nabla \tilde u$. Then we have
$$
	 \nabla u \cdot(\boldsymbol{v}-\boldsymbol{\tilde v})  \nabla u + \alpha ( \boldsymbol{b} - \boldsymbol{ \tilde b}) =  \tilde u_t \nabla \tilde u- u_t
	\nabla u+ \nabla \tilde u \cdot \boldsymbol{\tilde v}  \nabla \tilde u - \nabla u \cdot \boldsymbol{\tilde v}  \nabla u .
$$
A duality product with $\boldsymbol{v}-\boldsymbol{\tilde v}$ and an elementary calculation yields
\begin{align*}&  \Vert \nabla u \cdot(\boldsymbol{v}-\boldsymbol{\tilde v}) \Vert_{L^2}^2  + \alpha D_\mathcal{R}^{\boldsymbol{b}}(\boldsymbol{\tilde v},\boldsymbol{v}) + \alpha D_\mathcal{R}^{\boldsymbol{\tilde b}}(\boldsymbol{v},\boldsymbol{\tilde v}) = \\
& \qquad \qquad \qquad \langle \nabla \tilde u \tilde u_t - \nabla u u_t
	+ \nabla \tilde u \cdot \boldsymbol{\tilde v}  \nabla \tilde u - \nabla u \cdot \boldsymbol{\tilde v}  \nabla u, \boldsymbol{v}-\boldsymbol{\tilde v} \rangle
	\end{align*}
Using appropriate duality estimates on the right-hand side one immediately obtains estimates for the Bregman distances and the error in the velocity component parallel to $\nabla u$ in terms of errors in the data. To do so, let us further rewrite the right-hand side to obtain
\begin{align*}&  \Vert \nabla u \cdot(\boldsymbol{v}-\boldsymbol{\tilde v}) \Vert_{L^2}^2  + \alpha D_\mathcal{R}^{\boldsymbol{b}}(\boldsymbol{\tilde v},\boldsymbol{v}) + \alpha D_\mathcal{R}^{\boldsymbol{\tilde b}}(\boldsymbol{v},\boldsymbol{\tilde v}) = \\
& \qquad \qquad \qquad \langle   \tilde u_t +   \nabla \tilde u \cdot \boldsymbol{\tilde v}-  u_t - \nabla u \cdot \boldsymbol{\tilde v}, \nabla u \cdot (\boldsymbol{v}-\boldsymbol{\tilde v}) \rangle +\\
&	 \qquad \qquad \qquad \langle (\nabla \tilde u-\nabla u)   (\tilde u_t + \nabla \tilde u \cdot \boldsymbol{\tilde v}), \boldsymbol{v}-\boldsymbol{\tilde v} \rangle
	\end{align*}
and with Young's inequality we find
\begin{align*}
  \frac{1}{2}\Vert \nabla u \cdot(\boldsymbol{v}-\boldsymbol{\tilde v}) \Vert_{L^2}^2  + \alpha D_\mathcal{R}^{\boldsymbol{b}}(\boldsymbol{\tilde v},\boldsymbol{v}) + \alpha D_\mathcal{R}^{\boldsymbol{\tilde b}}(\boldsymbol{v},\boldsymbol{\tilde v})   \leq &\frac{1}2 \Vert (\tilde u_t-u_t) + (  \nabla \tilde u -\nabla u)\cdot \boldsymbol{\tilde v}\Vert_{L^2}^2 + \\&
\langle (\nabla \tilde u-\nabla u)   (\tilde u_t + \nabla \tilde u \cdot \boldsymbol{\tilde v}), \boldsymbol{v}-\boldsymbol{\tilde v} \rangle
	\end{align*}
The two error terms on the right-hand side can be interpreted well. The first term measures the difference of the image data in a norm related to the optical flow model, it can further be related to the consistency in the optical flow constraint with estimated velocity ${\boldsymbol{\tilde v}}$. The second term creates more trouble, since due to $\nabla \tilde u$ it can contain terms in the direction perpendicular to $\nabla u$, in which case it cannot be related to the first term on the left-hand side. Note that when viewing the optical flow estimation as an inverse problem this term is indeed related to the model error, i.e. the error in the forward operator
$\boldsymbol{v} \mapsto \boldsymbol{v} \cdot \nabla u$. Even if this term can be estimated somehow by the Bregman distances on the left-hand side (e.g. in the case of the Horn-Schunck functional simply by Young's inequality), it will lead to an error term of order $\frac{1}\alpha$. With the inherent smallness of $\alpha$ we hence conclude that errors in the image gradient will strongly influence the result. This can e.g. be interpreted in terms of the accuracy in the discrete approximation of the image gradient. The quality in the estimated flow indeed depends heavily on the gradient approximation in numerical results. This is illustrated in Table 2 via an evaluation of errors for different choices of difference quotients for $\nabla u$.

In the case of negligible error in the image gradient we can give an improved estimate:
\begin{proposition}
Let $u$ and $\tilde u$ be weakly differentiable image data such that $\partial_t u, \partial_t \tilde u \in L^2(\Omega)$ and $\nabla u = \nabla \tilde u$. Then the estimate 
\begin{equation}
	\frac{1}{2}\Vert \nabla u \cdot(\boldsymbol{v}-\boldsymbol{\tilde v}) \Vert_{L^2}^2  + \alpha D_\mathcal{R}^{\boldsymbol{b}}(\boldsymbol{\tilde v},\boldsymbol{v}) + \alpha D_\mathcal{R}^{\boldsymbol{\tilde b}}(\boldsymbol{v},\boldsymbol{\tilde v})   \leq \frac{1}2 \Vert \tilde u_t -  u_t  \Vert_{L^2}^2
\end{equation}
holds for $v$ and $\tilde v$ being the solution of  
\begin{align*}
	\min_{\boldsymbol{v}} \frac{1}{2}\left\|\boldsymbol{v}\cdot\nabla u + u_t\right\|_{L^2}^2  + \alpha \mathcal{R}(\boldsymbol{v}),
\end{align*}
with data $u$ respectively $\tilde u$.
\end{proposition}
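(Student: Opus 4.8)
The plan is to recognize the claimed inequality as the specialization of the general data-perturbation estimate derived just above the proposition to the case $\nabla \tilde u = \nabla u$, in which the troublesome model-error term disappears. Equivalently, and more transparently, I would derive it directly from the two optimality conditions by the standard Bregman-distance stability argument for a fixed linear inverse problem, in the spirit of \cite{burger2004convergence}.

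First I would write down the optimality conditions for the two minimizers. Since $\boldsymbol{v}$ minimizes the functional with data $u$ and $\boldsymbol{\tilde v}$ with data $\tilde u$, convexity of $\mathcal{R}$ provides subgradients $\boldsymbol{b}\in\partial\mathcal{R}(\boldsymbol{v})$ and $\boldsymbol{\tilde b}\in\partial\mathcal{R}(\boldsymbol{\tilde v})$ with
\begin{align*}
(u_t+\nabla u\cdot\boldsymbol{v})\nabla u+\alpha\boldsymbol{b}=0, \qquad (\tilde u_t+\nabla\tilde u\cdot\boldsymbol{\tilde v})\nabla\tilde u+\alpha\boldsymbol{\tilde b}=0.
\end{align*}
Imposing $\nabla u=\nabla\tilde u$ and subtracting, the quadratic cross terms collapse onto the common operator and I obtain
\begin{align*}
\bigl(\nabla u\cdot(\boldsymbol{v}-\boldsymbol{\tilde v})\bigr)\nabla u+(u_t-\tilde u_t)\nabla u+\alpha(\boldsymbol{b}-\boldsymbol{\tilde b})=0.
\end{align*}

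Next I would take the duality pairing of this identity with $\boldsymbol{v}-\boldsymbol{\tilde v}$. The first term yields exactly $\Vert\nabla u\cdot(\boldsymbol{v}-\boldsymbol{\tilde v})\Vert_{L^2}^2$, while the subgradient term yields the symmetric Bregman distance, $\langle\boldsymbol{b}-\boldsymbol{\tilde b},\boldsymbol{v}-\boldsymbol{\tilde v}\rangle=D_\mathcal{R}^{\boldsymbol{b}}(\boldsymbol{\tilde v},\boldsymbol{v})+D_\mathcal{R}^{\boldsymbol{\tilde b}}(\boldsymbol{v},\boldsymbol{\tilde v})$, straight from the definition of $D_\mathcal{R}$. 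This produces the identity
\begin{align*}
\Vert\nabla u\cdot(\boldsymbol{v}-\boldsymbol{\tilde v})\Vert_{L^2}^2+\alpha D_\mathcal{R}^{\boldsymbol{b}}(\boldsymbol{\tilde v},\boldsymbol{v})+\alpha D_\mathcal{R}^{\boldsymbol{\tilde b}}(\boldsymbol{v},\boldsymbol{\tilde v})=\langle\tilde u_t-u_t,\;\nabla u\cdot(\boldsymbol{v}-\boldsymbol{\tilde v})\rangle.
\end{align*}
Finally I would bound the single remaining term by Young's inequality, $\langle\tilde u_t-u_t,\nabla u\cdot(\boldsymbol{v}-\boldsymbol{\tilde v})\rangle\le\tfrac12\Vert\tilde u_t-u_t\Vert_{L^2}^2+\tfrac12\Vert\nabla u\cdot(\boldsymbol{v}-\boldsymbol{\tilde v})\Vert_{L^2}^2$, and absorb the second summand into the left-hand side, which is exactly the assertion.

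There is essentially no deep obstacle here; the whole force of the hypothesis $\nabla u=\nabla\tilde u$ is that the forward operator $\boldsymbol{v}\mapsto\nabla u\cdot\boldsymbol{v}$ coincides for both problems, so the model-error term that caused the $\tfrac1\alpha$ blow-up in the general estimate is identically zero. The only points to verify are that the pairings are well defined and that the optimality conditions may be written as above: with $\nabla u\in L^\infty$ and $u_t,\tilde u_t\in L^2$ the data terms are finite and Fr\'echet differentiable, the residual $(u_t+\nabla u\cdot\boldsymbol{v})\nabla u$ lies in a space dual to the one containing $\boldsymbol{v}-\boldsymbol{\tilde v}$, and nonnegativity of the two Bregman distances (convexity of $\mathcal{R}$) is precisely what justifies keeping them on the left. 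The argument is thus a clean instance of the standard Bregman-distance stability estimate for a fixed linear inverse problem.
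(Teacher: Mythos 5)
Your proposal is correct and follows essentially the same route as the paper: the paper obtains the proposition by specializing its general data-perturbation estimate (optimality conditions, duality product with $\boldsymbol{v}-\boldsymbol{\tilde v}$, Bregman-distance identity, Young's inequality) to the case $\nabla u = \nabla \tilde u$, which is exactly your derivation with the model-error term vanishing. Your direct computation and the absorption step match the paper's argument line for line.
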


Note that we see in any estimate that the velocity parallel to the image gradient (normal to level lines) can be estimated robustly from the image error (with constants independent of $\alpha$), while we obtain hardly information perpendicular to the gradient. This is just a mathematical consequence of the aperture problem in motion estimation. In the perpendicular direction we obtain some stability of the velocity via the Bregman distance terms, but note that the error will grow inversely proportional to $\alpha$. We mention that to leading order in a small time step $\tau$ we have
$$ \Vert \nabla u \cdot(\boldsymbol{v}-\boldsymbol{\tilde v}) \Vert_{L^2} \approx
 \Vert u (\cdot + \tau  \boldsymbol{v}(\cdot))-u (\cdot + \tau \boldsymbol{\tilde v} (\cdot)) \Vert_{L^2}.$$ 
Hence, the error measure is strongly related to the end-point error frequently used in practical evaluations of motion estimation, an issue we will discuss in detail below.
 
The limit $\alpha \rightarrow 0$ is the one usually studied in inverse problems with strong emphasis on the appropriate treatment of instabilities. In the case of motion estimation the underdeterminedness is the more crucial issue. Thus, the key question is to understand what solutions are obtained in the limit, respectively which kind of flows can be reconstructed well. The limiting solutions are determined by the constrained problem
\begin{equation}
 \min_{\boldsymbol{v}}	\mathcal{R}(\boldsymbol{v}) \quad \text{subject to } u_t + \nabla u \cdot \boldsymbol{v} = 0.
\end{equation}
The solutions to be reconstructed well are those satisfying a {\em source condition} (cf. \cite{burger2004convergence}), i.e. the optimality condition for the constrained problem in a Lagrangian duality setting. This is equivalent to
\begin{equation}
	\boldsymbol{b} = w \nabla u \in \partial \mathcal{R}(\boldsymbol{v}) \label{sourcecondition}
\end{equation}
for some $w \in L^2(\Omega)$. Under this condition one can construct data $\tilde u_t = \tilde u_t + \alpha w$, such that the constrained solution also satisfies the unconstrained variational problem with regularization parameter $\alpha >0$, thus error estimation reduces to the above case. From \eqref{sourcecondition} we can give a mathematical reason why it is  it is useful to have images with a large gradient, which corresponds to the usual intuition. The data perturbation is a multiple of $w$, hence error estimates depend on the norm of $w$, which is however inversely proportional to the norm of $\nabla u$. A more detailed study of  \eqref{sourcecondition} depends on the specific regularization terms and is an interesting question for further research.

\section{Numerical Solution}

In the following we discuss the numerical solution of the variational models discussed above. We start with a unified discussion of primal-dual iterative methods and then proceed to discretization issues. 

\subsection{Primal-Dual Algorithm}
The concept of duality offers an efficient way of minimizing the introduced variational models. Let us for the numerical application consider two finite dimensional vector spaces $\mathcal{X}$ and $\mathcal{Y}$ equipped with a scalar product $\langle\cdot,\cdot\rangle$ and a norm $\left\|\cdot\right\|_2$. We furthermore consider a continuous linear operator $K:\mathcal{X}\rightarrow\mathcal{Y}$. Now, class of variational problems in this section can be written as 
\begin{align}
	\min_{x\in\mathcal{X}} G(v) + F(Kv),
	\label{cpEqPrimal}
\end{align}
with $F,G : \mathcal{X}\rightarrow\mathbb{R}$ are proper, convex and lower semi-continuous functionals. In our application, $G(v)$ usually incorporates the data term, whereas $ F(Kv)$ denotes the regularizer. In the following, we denote Equation $\eqref{cpEqPrimal}$ as the \textit{primal problem}. The primal problem is often hard to minimize or yields very slow algorithms. Instead of the primal problem, we can equivalently optimize the so-called \textit{primal-dual problem}:
\begin{align}
	\min_{x\in\mathcal{X}}\max_{y\in\mathcal{Y}} \left\langle Kv,y \right\rangle + G(v) - F^*(y).
	\label{cpEqSaddlepoint}
\end{align}
Here $F^*$ refers to the convex conjugate of $F$, which can be easily calculated for the observed regularizers. The very well known algorithm of Chambolle and Pock dedicates to this class of problems \cite{chambolle2011first}. For $\tau,\sigma>0$, a pair $(v^0,y^0)\in\mathcal{X}\times\mathcal{Y}$ and initial value $\bar{v}^0=0$ we obtain the following iterative scheme to solve the saddle-point problem  $\eqref{cpEqSaddlepoint}$:
\begin{align}
	y^{k+1} &= prox_{\sigma F^*}(y^k+\sigma K\hat{x}^k)\\
	\label{eq:iterativex}
	v^{k+1} &=  prox_{\tau G}(v^k-\tau K^* y^{k+1})\\
	\hat{v}^{k+1} & = 2 v^{k+1} -v^k
\end{align}
Here, $prox_{\tau G}$ denotes the \textit{proximal} or \textit{resolvent} operator
\begin{align*}
	prox_{\tau G}(y) = \left( I+\tau \partial G\right)^{-1}(y) := \argmin_v \left\{ \frac{\left\| v-y \right\|_2^2}{2}+\tau G(v) \right\} ,
\end{align*}
which can be interpreted as a compromise between minimizing $G$ and being close to the input argument $y$. Suitable choices of $\tau$ and $\sigma$ were chosen according to \cite{pock2011diagonal} and are given in Section \ref{sectionDiscretization} for each algorithm. The primal-dual methods are efficient if the proximal problem can be solved efficiently. In \cite{chambolle2011first} the authors demonstrated an application of the primal-dual algorithm to a L$^1$-TV optical flow problem. In the following we will extend this application to each of the introduced models and provide solutions to the occurring proximal problems.\\
In Table \ref{tab:listPrimalDualNotation} we transferred our models to the notation of the primal problem \eqref{cpEqPrimal}. Let us begin with the proximal problems for the primal part $G(v)$, for a more detailed description we refer to \cite{dirks}.
\paragraph{L$^2$ data term:}
For the L$^2$ data term we have to solve for $\tilde{v}^{k+1} := v^k-\tau K^* y^{k+1}$ the following proximal problem:
\begin{align*}
	\argmin_v \left\{ \frac{\left\| v-\tilde{v}^{k+1} \right\|_2^2}{2}+\frac{\tau}{2} \left\|v\cdot\nabla u + u_t\right\|_2^2 \right\}.
\end{align*} 
This yields a linear optimality system, which can be directly calculated. The solution is given by $v = f(\tilde{v}^k)$, where $f$ is a linear function also incorporating the scalar terms $u_t,\nabla u$ and $\tau$.
\paragraph{L$^1$ data term:}
In case of the L$^1$ optical flow term, the proximal problem reads
\begin{align*}
	\argmin_v \left\{ \frac{\left\| v-\tilde{v}^{k+1} \right\|_2^2}{2}+ \tau \left\|v\cdot\nabla u + u_t\right\|_1 \right\}.
\end{align*}
Defining the affine linear function $\rho(v) := v\cdot\nabla u + u_t$, the problem above represents a affine-linear L$^1$ optimization problem. It can be shown that the solution is given by
\begin{align*}
	v = \tilde{v}^{k+1} + \begin{cases}
		\tau \nabla u &\mbox{if } \rho(\tilde{v}^{k+1}) < -\tau \left\|\nabla u\right\|_2^2 \\  
		-\tau \nabla u &\mbox{if } \rho(\tilde{v}^{k+1}) > \tau \left\|\nabla u\right\|_2^2 \\
		-\frac{\rho(\tilde{v}^{k+1})}{\left\|\nabla u\right\|_2^2} \nabla u &\mbox{else }
	\end{cases}.
\end{align*}
\begin{table}
\footnotesize
	\centering
	\begin{tabular}{|c|c|c|c|}
		\hline
			\textbf{Model} & \textbf{G(v)} & \textbf{F(Kv)} & \textbf{K}\\
		\hline
			L$^2$-L$^2$ &  $\frac{1}{2}\left\|v\cdot\nabla u + u_t\right\|_2^2$ & $\frac{\alpha}{2}\left\|\nabla v\right\|_2^2$ &  $\begin{pmatrix}\nabla&0\\0&\nabla\end{pmatrix}$\\
		\hline
			L$^2$-TV &  $\frac{1}{2}\left\|v\cdot\nabla u + u_t\right\|_2^2$ & $\alpha\left\|\nabla v\right\|_1$ &  $\begin{pmatrix}\nabla&0\\0&\nabla\end{pmatrix}$\\
		\hline
			L$^1$-TV &  $\left\|v\cdot\nabla u + u_t\right\|_1$ & $\alpha\left\|\nabla v\right\|_1$ &  $\begin{pmatrix}\nabla&0\\0&\nabla\end{pmatrix}$\\
		\hline\hline
		\textbf{Model} & \textbf{G(v,w)} & \textbf{F(K(v,w))} & \textbf{K}\\
		\hline
			L$^1$-TV/L$^2$ &  $\left\|v\cdot\nabla u + u_t\right\|_1 + \frac{\alpha_1}{2}\left\|w\right\|_2^2$ & $\alpha_0\left\|\nabla v-w\right\|_1$ &  $\begin{pmatrix}\nabla & 0 & -I & 0\\0&\nabla & 0 & -I\end{pmatrix}$\\
		\hline
			L$^1$-TV/TV &  $\left\|v\cdot\nabla u + u_t\right\|_1$ & $\alpha_0\left\|\nabla v-w\right\|_1 + \alpha_1\left\|\nabla w\right\|_1$ &  $\begin{pmatrix}\nabla & 0 & -I & 0\\0&\nabla & 0 & -I\\0 & 0 & \nabla & 0\\0 & 0 & 0 & \nabla\end{pmatrix}$\\
		\hline
	\end{tabular}
	\label{tab:listPrimalDualNotation}
\end{table}

\paragraph{L$^2$ regularization:}
Denoting the proximal problems for the dual part requires calculating the convex conjugate $F^*$ of the dual part $F$ first. For the L$^2$ regularization term $F(Kv)=\frac{\alpha}{2}\left\|\nabla v\right\|_2^2$ we calculate $F^*(y) = \frac{1}{2\alpha}\left\|y\right\|_2^2$ and obtain the proximal problem
\begin{align*}
	\argmin_v \left\{ \frac{\left\| y-\tilde{y}^{k+1} \right\|_2^2}{2}+ \tau \frac{1}{2\alpha}\left\|y\right\|_2^2 \right\}.
\end{align*}
Consequently, the proximal problem can be solved straightaway. 
\paragraph{TV regularization:}
In case of the total variation and the higher order regularization we always have L$^1$ norms involved. Similar to the standard TV regularization ($F(Kv)=\alpha\left\|\nabla v\right\|_1$) we have $F^*(y) = \alpha\delta_{B(L^\infty)}(y/\alpha)$, where $\delta_{B(L^\infty)}$ denotes the indicator function of the L$^\infty$ unit ball. The corresponding proximal problem then reads
\begin{align*}
	\argmin_v \left\{ \frac{\left\| y-\tilde{y}^{k+1} \right\|_2^2}{2}+ \tau \alpha\delta_{B(L^\infty)}(y/\alpha) \right\}.
\end{align*}
The solution is given by a point-wise projection of $\tilde{y}^{k+1}$ onto the unit ball corresponding to the chosen vector norm. The solution is will be discussed in more detail in Section \ref{sectionDiscretization}.

\subsection{Discretization and Parameters}
\label{sectionDiscretization}
In the context of discretization we have to consider two different aspects. On the one hand we have to approximate the image derivatives $u_t,u_x$ and $u_y$, which are passed to the algorithm as static variables afterwards. One the other hand, we have to discretize the operators $K$ and $K^*$ coming from the primal-dual iteration.\\
Starting with the image discretization, we consider the image domain to consist of the regular grid:
\begin{align*} 
	\left\lbrace (t,i,j) :t=0,1,\quad i=0,\ldots,n_x,\quad j=0,\ldots,n_y \right\rbrace
\end{align*}
Since we are interested in the velocity field between $t=0$ and $t=1$ we have to evaluate the derivatives of $u^{t,i,j}$ at $t=0$. In our observation, a mixed scheme consisting of a forward difference for the time derivative and central differences for the spatial derivatives turned out to give the best results. Stability in terms of $u$ is not required, due to the fact that they act as constants in the model. Consequently, we obtain the following scheme:
\begin{align*}
	u_t^{i,j} &= u^{1,i,j}-u^{0,i,j}\\ 
	u_x^{i,j} &= \left.\begin{cases} u^{0,i+1,j}-u^{0,i-1,j} &\mbox{if } i>0 \mbox{ and } i<n_x \\ 
		0 & \mbox{if } i=0 \mbox{ or } i=n_x \end{cases}\right\}\\
	u_y^{i,j} &= \left.\begin{cases} u^{0,i,j+1}-u^{0,i,j-1} &\mbox{if } j>0 \mbox{ and } j<n_y \\ 
		0 & \mbox{if } j=0 \mbox{ or } j=n_y \end{cases}\right\}
\end{align*}
For the sake of simplicity, we omitted the time-dependency in the derivatives. \\
The operators in our model only consist of gradients for the spatial regularization and the corresponding divergences. We propose a forward discretization with Neumann boundary conditions for the gradient and consequently obtain a backward difference for the divergence to keep a discrete adjoint structure. The resulting scheme reads
\begin{align*}
	v_x^{i,j} &= \left.\begin{cases} v^{i+1,j}-v^{i,j} &\mbox{if } i<n_x \\ 
		0 & \mbox{if } i=n_x \end{cases}\right\}\\
	v_y^{i,j} &= \left.\begin{cases} v^{i,j+1}-v^{i,j} &\mbox{if } j<n_y \\ 
		0 & \mbox{if } j=n_y \end{cases}\right\}\\
	\nabla\cdot \boldsymbol{y}^{i,j} &= 
	\left.\begin{cases} 
		y_1^{i,j}-y_1^{i-1,j} &\mbox{if } i>0 \\ 
		y_1^{i,j} & \mbox{if } i=0\\
		-y_1^{i-1,j} & \mbox{if } i=n_x 
	\end{cases}\right\} +\left.\begin{cases} 
		y_2^{i,j}-y_2^{i,j-1} &\mbox{if } j>0 \\ 
		y_2^{i,j} & \mbox{if } j=0 \\
		-y_2^{i,j-1} & \mbox{if } j=n_y
	\end{cases}\right\}
\end{align*} 
The primal-dual algorithms moreover requires suitable parameters $\sigma$ and $\tau$ such that $\sigma\tau \leq \left\| K \right\|$. It has been shown in \cite{pock2011diagonal} that a diagonal preconditioning technique can be used to find $\tau,\sigma$ without calculating $\left\|K\right\|$. According to \cite{pock2011diagonal}, Lemma 2, we choose $\tau=\frac{1}{4},\sigma=\frac{1}{2}$ for the gradient models from Section \ref{sec:standard} and $\tau=\frac{1}{5},\sigma=\frac{1}{3}$ for the extended models from Section \ref{modelsExtended}.\\
In the previous section, the solution of a proximal problem involving an indicator function of the L$^\infty$ unit ball of $y$ was given as a point-wise projection. To conceive this, let us begin by writing down the convex set corresponding to $\delta_{B(L^\infty)}(y/\alpha)$:
\begin{align*}
	\left\lbrace y : \left\| y / \alpha\right\|_\infty \leq 1 \right\rbrace.
\end{align*}
Here, $\left\| y / \alpha\right\|_\infty$ refers to the discrete maximum over all elements $(i,j)$ in the image domain:
\begin{align*}
	\left\|y/\alpha\right\|_\infty = \max_{(i,j)}  \left| y_{i,j}/\alpha \right|.
\end{align*}
An interesting aspect is how to evaluate the vector norm $\left|y_{i,j}\right| = \left|(y^1,y^2,y^3,y^4)_{i,j}\right|$. Choosing
\begin{align*}
	\left|y_{i,j}\right| := \left|y^1_{i,j}\right| + \left|y^2_{i,j}\right| + \left|y^3_{i,j}\right| + \left|y^4_{i,j}\right|
\end{align*}
results in the so-called \textit{anisotropic} variant, whereas 
\begin{align*}
	\left|\nabla\boldsymbol{v}_{i,j}\right| := \sqrt{ (y^1_{i,j})^2 + (y^2_{i,j})^2 + (y^3_{i,j})^2 + (y^4_{i,j})^2}
\end{align*}
is denoted as the \textit{fully isotropic} one. The anisotropic total variation is more suitable when dealing with quadratic structures, the isotropic variant better fits to circular structures. Depending on the chosen vector norm for $v$ (anisotropic, fully isotropic), we obtain a different dual inner norm $\left| y_{i,j}/\alpha \right|$. For the anisotropic case, that refers to the vectorial L$^1$ norm, we get the maximum norm for the dual variable:
\begin{align*}
	\left| y_{i,j} \right| = \max\left\lbrace\left| y^{1,1}_{i,j} \right|,\left| y^{1,2}_{i,j}\right|,\left|y^{2,1}_{i,j} \right|,\left| y^{2,2}_{i,j} \right|\right\rbrace.
\end{align*}
Since the fully isotropic case refers to the self-dual Euclidean norm, we get in this case
\begin{align*}
	\left| y_{i,j} \right| = \left\| y_{i,j} \right\|_2 = \sqrt{ (y^{1,1}_{i,j})^2 + (y^{1,2}_{i,j})^2 + (y^{2,1}_{i,j})^2 + (y^{2,2}_{i,j})^2 }.
\end{align*}
Transferring this to the optimization problem above, we get in the anisotropic case a the point-wise projection of $\tilde{y}^{k+1}$ onto $[-\alpha,\alpha]$:
\begin{align*}
	y^{k+1} = \min(\alpha,\max(-\alpha,\tilde{y}^{k+1})),
\end{align*}
and for the isotropic case a point-wise projection of $\tilde{y}^{k+1}$ onto the L$^2$ unit ball:
\begin{align*}
	y^{k+1}_{i,j} = \frac{\tilde{y}^{k+1}_{i,j}}{\max(1,\left\| \tilde{y}^{k+1}_{i,j} / \alpha \right\|_2)}.
\end{align*}
For isotropic and anisotropic variants of the extended models, the norms are achieved equivalently. However, it has to be considered that for the L$^1$-TV/TV model there are independent norms for both parts of the regularization. Using the partially isotropic variant also results in independent norms for the different components of $v$. For the sake of simplicity we will restrict ourselves to the fully isotropic cases for all appearing total variations in the numerical realizations.

\section{Results}

\subsection{Error Measures for Velocity Fields}

Finding error measures for velocity fields is a delicate problem which can be motivated by the following simple example. Consider two images, each consisting of only $3\times 3$ pixels of the following form:
\begin{align}\label{align:einfachesBeispielBewegung}
	u_1 = \begin{pmatrix}1&1&0\\0&0&0\\0&0&0\end{pmatrix},\quad u_2 = \begin{pmatrix}0&0&0\\1&1&0\\0&0&0\end{pmatrix}.
\end{align}
From our point of view we seek for a velocity field $\boldsymbol{v}= (v^1,v^2)$ that transforms $u_1$ into $u_2$. Unfortunately, it is unclear which velocity field underlies this motion. Examples of possible solutions are
\begin{enumerate}
	\item Just move the pixels at position $(1,1)$ and $(1,2)$ by 1 to the bottom, hence 
	\begin{align*}
		v^1 = \begin{pmatrix}1&1&0\\0&0&0\\0&0&0\end{pmatrix},\quad v^2 = \begin{pmatrix}0&0&0\\0&0&0\\0&0&0\end{pmatrix}.
	\end{align*}
	\item Move the pixels at position $(1,1)$ and $(1,2)$ by 1 to the bottom and change their position:
	\begin{align*}
		v^1 = \begin{pmatrix}1&1&0\\0&0&0\\0&0&0\end{pmatrix},\quad v^2 = \begin{pmatrix}1&-1&0\\0&0&0\\0&0&0\end{pmatrix}.
	\end{align*}
	\item Move the whole image by 1 to the bottom which gives 
	\begin{align*}
		v^1 = \begin{pmatrix}1&1&1\\1&1&1\\1&1&1\end{pmatrix},\quad v^2 = \begin{pmatrix}0&0&0\\0&0&0\\0&0&0\end{pmatrix}.
	\end{align*}
	\item Move the pixels at position $(1,1)$ and $(1,2)$ by 1 to the bottom and exchange some pixels in the third column 
	\begin{align*}
		v^1 = \begin{pmatrix}1&1&1\\0&0&1\\0&0&-2\end{pmatrix},\quad v^2 = \begin{pmatrix}0&0&0\\0&0&0\\0&0&0\end{pmatrix}.
	\end{align*}
\end{enumerate}
All these velocity fields produce the same result, that is they all fulfill the optical flow constraint $u_t+\nabla u\cdot \boldsymbol{v} = 0$. This results from the fact that for given images the optical flow constraint states for every point $x\in\Omega$ only one equation for a 2-dimensional velocity field $\boldsymbol{v}$. The solution is unique iff there exists a unique bijection between $u_1$ and $u_2$. This requires both images to consist of the same intensity values, which furthermore have to be unique. Unfortunately, this assumption is far from reality, since regions of constant intensity are characteristic for background or objects. Hence, for practical problems we have a highly underdetermined system and consequently a huge variety of possible underlying velocity fields $\boldsymbol{v}$.\\
As we will see later an error measure always explicitly or implicitly favors one of these possible results over the others. Those error measures, which explicitly prefer one result, expect a given ground truth field $\boldsymbol{v}_{GT}=(v^1_{GT},v^2_{GT})$ and measure a distance $d(\boldsymbol{v}_{GT},\boldsymbol{v})$ from the calculated velocity field to the given ground truth field. This strategy is questionable because in real world examples we usually do not have a ground truth velocity field, and artificial examples can usually be generated by several different ground truth fields. Hence, setting the ground truth $\boldsymbol{v}_{GT}$ directly favors a subjective result. This is at least from the mathematical viewpoint questionable.

\subsubsection{Absolute Endpoint Error}
\label{motionEstimationSubsectionAEE}
Despite the fact that explicit error measures might be problematic they are often used in the literature and we will also use them to evaluate our algorithms. In $\cite{baker2011database}$ two explicit error measures have been presented. The most intuitive one is the average endpoint error (aee), proposed in $\cite{otte1994optical}$, which is the vector-wise Euclidean norm of the difference vector $\boldsymbol{v}-\boldsymbol{v}_{GT}$. The difference is divided by $\left|\Omega\right|$ and we have
\begin{align*}
	aee := \frac{1}{\left|\Omega\right|}\int_{\Omega}\sqrt{(v^1(x)-v^1_{GT}(x))^2 + (v^2(x)-v^2_{GT}(x))^2} d x,
\end{align*}
or in a discrete formulation
\begin{align*}
	AEE := \frac{1}{nPx}\sum_{i=1}^{nPx}\sqrt{(v^1(i)-v^1_{GT}(i))^2 + (v^2(i)-v^2_{GT}(i))^2}.
\end{align*}
Here, $nPx$ denotes the number of pixels.

\subsubsection{Angular Error} 
\label{motionEstimationSubsectionAE}
A second measure states the angular error (ae) which goes back to the work of Fleet and Jepson $\cite{fleet1990computation}$ and a survey of Barron \textit{et al.} $\cite{barron1994performance}$. Here $\boldsymbol{v}$ and $\boldsymbol{v}_{GT}$ are projected into the 3-D space (to avoid division by zero) and normalized by
\begin{align*}
	\hat{\boldsymbol{v}} := \frac{(v^1,v^2,1)}{\sqrt{\left\|\boldsymbol{v}\right\|^2 + 1}}, \quad \quad \hat{\boldsymbol{v}}_{GT} := \frac{(v_{GT}^1,v^2_{GT},1)}{\sqrt{\left\|\boldsymbol{v}_{GT}\right\|^2 + 1}}.
\end{align*}
The error is then calculated measuring the angle between $\hat{\boldsymbol{v}}$ and $\hat{\boldsymbol{v}}_{GT}$ in the continuous setting as
\begin{align*}
	ae := \frac{1}{\left|\Omega\right|}\int_{\Omega} \arccos(\hat{\boldsymbol{v}}(x)\cdot\hat{\boldsymbol{v}}_{GT}(x))d x,
\end{align*}
and in a discrete setting as
\begin{align*}
	AE := \frac{1}{nPx}\sum_{i=1}^{nPx}  \arccos(\hat{\boldsymbol{v}}(i)\cdot\hat{\boldsymbol{v}}_{GT}(i)).
\end{align*}
Besides the fact that the $\textit{ae}$ is very popular, we want to emphasize a serious drawback. To illustrate this drawback we took a vector $\boldsymbol{v}_{GT}=(v^1,v^2)$ and denote the Euclidean norm on the x-axis in Figure $\ref{fig:angularErrorProblem}$. This vector has been disturbed by an absolute and relative error of $0.01$ to create $\boldsymbol{v}_{abs}$ and $\boldsymbol{v}_{rel}$:
\begin{align*}
	\boldsymbol{v}_{abs} = \boldsymbol{v}_{GT} -(0.99,0.99)^T\\
	\boldsymbol{v}_{rel} = \frac{\boldsymbol{v}_{GT}}{\left\| \boldsymbol{v}_{GT}\right\|_2 } \cdot 0.99.
\end{align*}
Afterwards we calculated the angular error between $\boldsymbol{v}$ and $\boldsymbol{v}_{abs}$ resp. $\boldsymbol{v}$ and $\boldsymbol{v}_{rel}$ and denoted this error on the $y-axis$ of Figure $\ref{fig:angularErrorProblem}$.
The first obvious property is that the error shrinks for larger velocities. This is critical since larger velocities are usually coupled with the object we are interested in. In the contrary absolute errors in small velocities (e.g. background) are over-penalized. 
To make this clear, let us consider a simple example: Take an algorithm producing a all-zero velocity field. This result perfectly recovers the background, but causes errors in the region of a moving object. Consider on the other hand an algorithm that perfectly recovers the movement in the background, but introduces slight errors in parts of the object. Due to the fact that errors in the background are over-penalized, the first result might be preferred although it yields a worse result.
\begin{figure}\centering
	\includegraphics[width=.6\textwidth,natwidth=500,natheight=439]{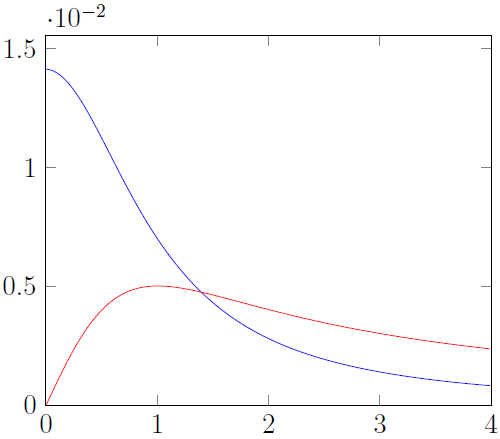}
	\caption[Plot to visualize drawbacks of the angular error]{Plot of the angular error \textit{ae} on the y-axis with increasing $\left\|\boldsymbol{v}\right\|$ on the x-axis. \\Blue: Absolute error $\left\| \boldsymbol{v}-\boldsymbol{v}_{GT}\right\|_2$ of 0.01\\Red: Relative error $\left\| \boldsymbol{v}-\boldsymbol{v}_{GT}\right\|_2 / \left\|\boldsymbol{v}_{GT}\right\|$ of 0.01}
	\label{fig:angularErrorProblem}
\end{figure}

\subsection{Evaluation}
In order to evaluate the introduced models we used the evaluation sequences from the Middlebury optical flow database \cite{middleburyURL}. First, the given ground-truth flow fields $\boldsymbol{v}_{GT}$ were scaled down to a magnitude of one pixel to overcome the limitations of our algorithms (see Section \ref{largeScaleOpticalFlow}). Afterwards, the evaluation images were created by keeping the first frame $I_1$ and generating the second frame $I_2$ by cubic interpolation of $I_1(x+\boldsymbol{v}_{GT})$. The noisy dataset was created with Gaussian noise ($\sigma=0.002$).\\
The evaluation was performed on the one hand with static parameters along all datasets and on the other hand by manually adapting the parameters to find smaller values in terms of AE and AEE. The static parameters can be found in Table \ref{parametersAndRanks}. Moreover, Table \ref{parametersAndRanks} contains the average rank of each algorithm for static and dynamic parameters over all evaluation datasets. Detailed error plots can be found in Figure \ref{fig:plotError}. An overview of typical results for the individual algorithms is printed in Figure \ref{figureResults2}.

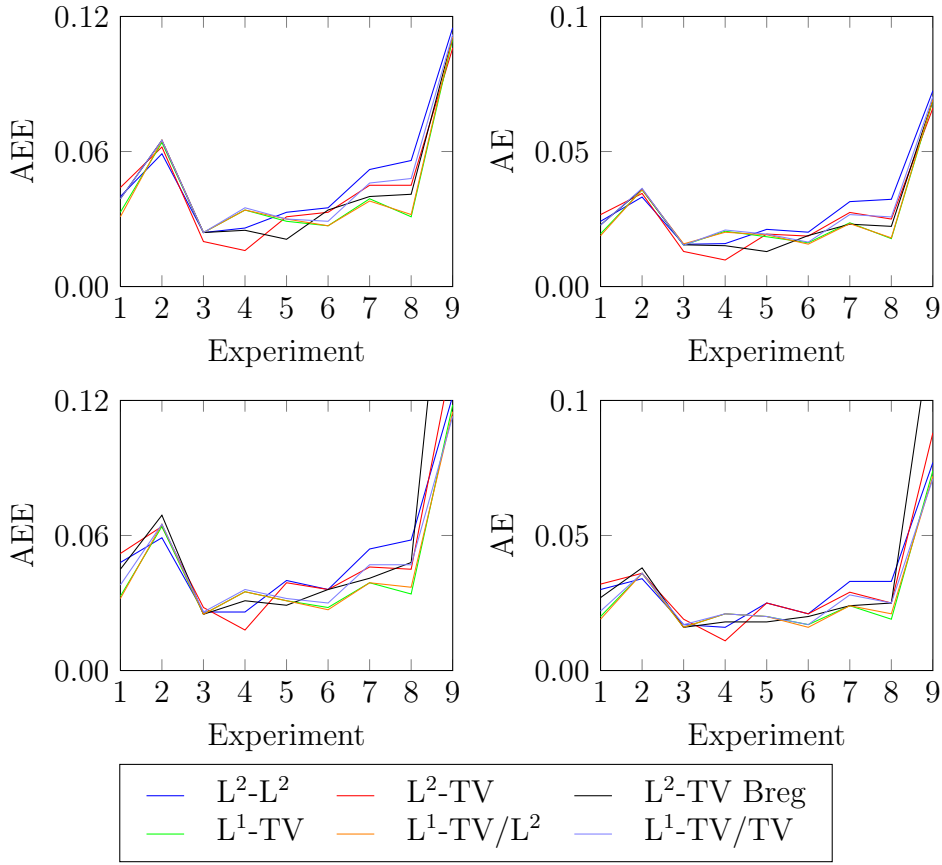
\begin{figure}
	\centering
	\begin{tikzpicture}[scale=1]
	\begin{axis}[
	xmin=1,
	xmax=9,
	ymin=0.00,
	ymax=0.12,
	width=6cm,
	xlabel=Experiment,
	xlabel near ticks,
	ylabel=AEE,
	ylabel near ticks,
	ytick={0.00,0.06,0.12},
	yticklabels={0.00,0.06,0.12},
	xtick={1,2,3,4,5,6,7,8,9},
	xticklabels={1,2,3,4,5,6,7,8,9},
	scaled ticks=false,
	tick label style={/pgf/number format/fixed} ]
	\addplot[mark=none,color=blue] table[x index = 0,y index = 1] from {evaluationResultBestAEE.csv};
	\addplot[mark=none,color=red] table[x index = 0,y index = 2] from {evaluationResultBestAEE.csv};
	\addplot[mark=none,color=green] table[x index = 0,y index = 3] from {evaluationResultBestAEE.csv};
	\addplot[mark=none,color=black] table[x index = 0,y index = 4] from {evaluationResultBestAEE.csv};
	\addplot[mark=none,color=orange] table[x index = 0,y index = 5] from {evaluationResultBestAEE.csv};
	\addplot[mark=none,color=blue!50] table[x index = 0,y index = 6] from {evaluationResultBestAEE.csv};
	\end{axis}
	\end{tikzpicture}
\begin{tikzpicture}[scale=1]
	\begin{axis}[
		xmin=1,
		xmax=9,
		ymin=0.00,
		ymax=0.1,
		width=6cm,
		xlabel=Experiment,
		xlabel near ticks,
		ylabel=AE,
		ylabel near ticks,
		ytick={0.00,0.05,0.1},
		yticklabels={0.00,0.05,0.1},
		xtick={1,2,3,4,5,6,7,8,9},
		xticklabels={1,2,3,4,5,6,7,8,9},
		scaled ticks=false,
		tick label style={/pgf/number format/fixed} ]
		\addplot[mark=none,color=blue] table[x index = 0,y index = 1] from {evaluationResultBestAE.csv};
		\addplot[mark=none,color=red] table[x index = 0,y index = 2] from {evaluationResultBestAE.csv};
		\addplot[mark=none,color=green] table[x index = 0,y index = 3] from {evaluationResultBestAE.csv};
		\addplot[mark=none,color=black] table[x index = 0,y index = 4] from {evaluationResultBestAE.csv};
		\addplot[mark=none,color=orange] table[x index = 0,y index = 5] from {evaluationResultBestAE.csv};
		\addplot[mark=none,color=blue!50] table[x index = 0,y index = 6] from {evaluationResultBestAE.csv};
	\end{axis}

\end{tikzpicture}
		
\begin{tikzpicture}
	\begin{axis}[
	xmin=1,
	xmax=9,
	ymin=0.00,
	ymax=0.12,
	width=6cm,
	xlabel=Experiment,
	xlabel near ticks,
	ylabel=AEE,
	ylabel near ticks,
	ytick={0.00,0.06,0.12},
	yticklabels={0.00,0.06,0.12},
	xtick={1,2,3,4,5,6,7,8,9},
	xticklabels={1,2,3,4,5,6,7,8,9},
	scaled ticks=false,
	tick label style={/pgf/number format/fixed} ]
	\addplot[mark=none,color=blue] table[x index = 0,y index = 1] from {evaluationResultFixedAEE.csv};
	\addplot[mark=none,color=red] table[x index = 0,y index = 2] from {evaluationResultFixedAEE.csv};
	\addplot[mark=none,color=green] table[x index = 0,y index = 3] from {evaluationResultFixedAEE.csv};
	\addplot[mark=none,color=black] table[x index = 0,y index = 4] from {evaluationResultFixedAEE.csv};
	\addplot[mark=none,color=orange] table[x index = 0,y index = 5] from {evaluationResultFixedAEE.csv};
	\addplot[mark=none,color=blue!50] table[x index = 0,y index = 6] from {evaluationResultFixedAEE.csv};
	\end{axis}
\end{tikzpicture}
\begin{tikzpicture}[scale=1]
	\begin{axis}[
		xmin=1,
		xmax=9,
		ymin=0.00,
		ymax=0.1,
		width=6cm,
		xlabel=Experiment,
		xlabel near ticks,
		ylabel=AE,
		ylabel near ticks,
		ytick={0.00,0.05,0.1},
		yticklabels={0.00,0.05,0.1},
		xtick={1,2,3,4,5,6,7,8,9},
		xticklabels={1,2,3,4,5,6,7,8,9},
		scaled ticks=false,
		tick label style={/pgf/number format/fixed} ]
		\addplot[mark=none,color=blue] table[x index = 0,y index = 1] from {evaluationResultFixedAE.csv};
		\addplot[mark=none,color=red] table[x index = 0,y index = 2] from {evaluationResultFixedAE.csv};
		\addplot[mark=none,color=green] table[x index = 0,y index = 3] from {evaluationResultFixedAE.csv};
		\addplot[mark=none,color=black] table[x index = 0,y index = 4] from {evaluationResultFixedAE.csv};
		\addplot[mark=none,color=orange] table[x index = 0,y index = 5] from {evaluationResultFixedAE.csv};
		\addplot[mark=none,color=blue!50] table[x index = 0,y index = 6] from {evaluationResultFixedAE.csv};
	\end{axis}
\end{tikzpicture}

\begin{tikzpicture}[scale=1]
	\node[draw=black, below left=2mm] {%
		\begin{tabular}{llllll}
			\raisebox{2pt}{\tikz{\draw[blue] (0,0) -- (5mm,0);}} &L$^2$-L$^2$ & \raisebox{2pt}{\tikz{\draw[red] (0,0) -- (5mm,0);}}& L$^2$-TV 		& \raisebox{2pt}{\tikz{\draw[black] (0,0) -- (5mm,0);}}  & L$^2$-TV Breg\\
			\raisebox{2pt}{\tikz{\draw[green] (0,0) -- (5mm,0);}}&L$^1$-TV & \raisebox{2pt}{\tikz{\draw[orange] (0,0) -- (5mm,0);}}& L$^1$-TV/L$^2$ & \raisebox{2pt}{\tikz{\draw[blue!50] (0,0) -- (5mm,0);}}& L$^1$-TV/TV
		\end{tabular}};
\end{tikzpicture}

	\caption{Plot of absolute endpoint error (AEE) and angular error (AE) for manually chosen parameter vaules (top) and fixed parameter vaules (bottom). The number on the x-axis denotes the following evaluation sequences from the middlebury database: 1. Dimetrodon, 2. Grove2, 3. Grove3, 4. Hydrangea, 5. Rubber Whale, 6. Urban2, 7. Urban3, 8. Venus.}
	\label{fig:plotError}
\end{figure}

\begin{figure}
	\centering
	\includegraphics[height=.14\textheight,natwidth=584,natheight=388]{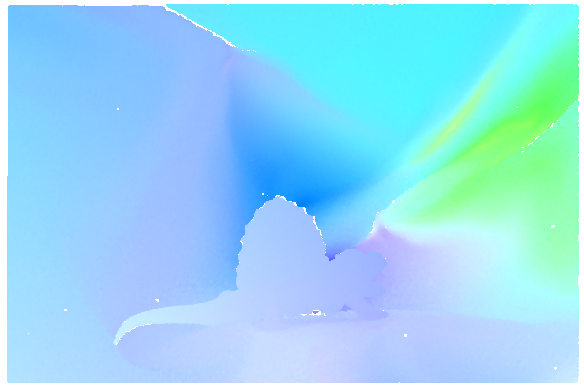}\enskip
	\includegraphics[height=.14\textheight,natwidth=584,natheight=388]{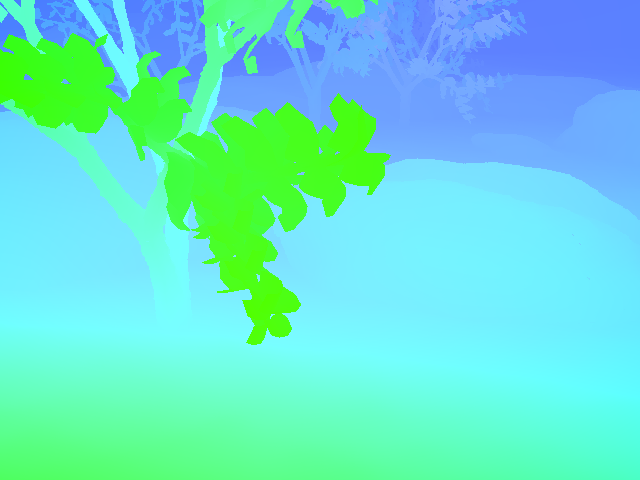}\enskip
	\includegraphics[height=.14\textheight,natwidth=584,natheight=388]{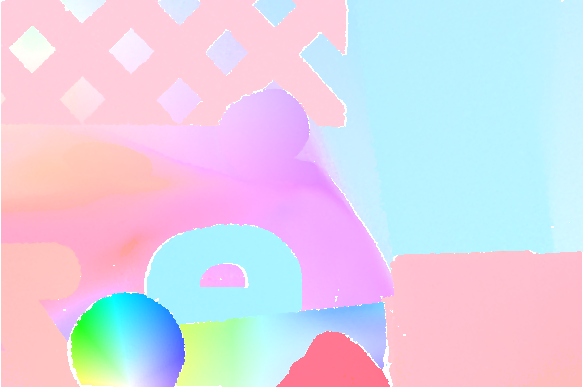}\\
	
	\includegraphics[height=.14\textheight,natwidth=584,natheight=388]{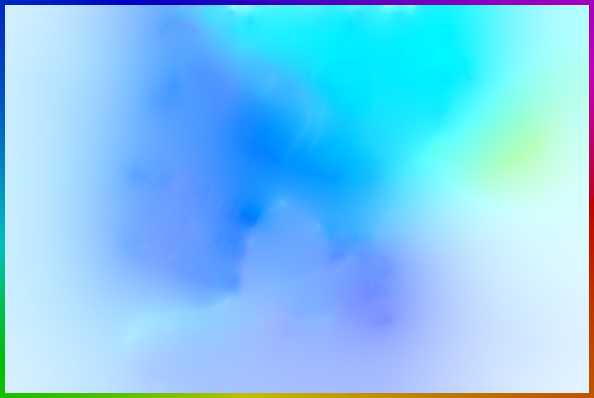}\enskip
	\includegraphics[height=.14\textheight,natwidth=584,natheight=388]{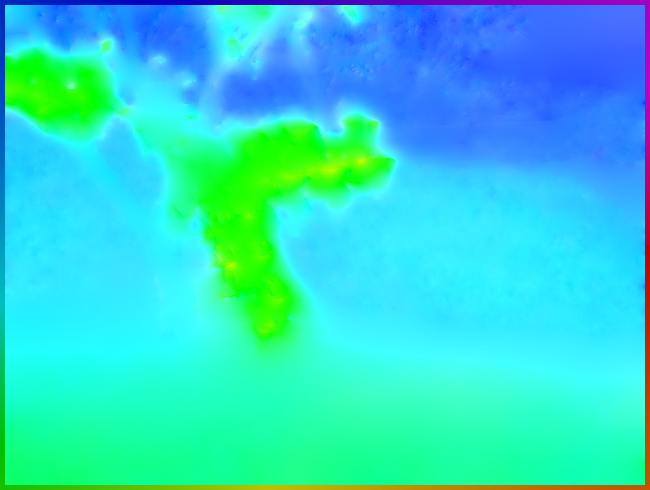}\enskip
	\includegraphics[height=.14\textheight,natwidth=584,natheight=388]{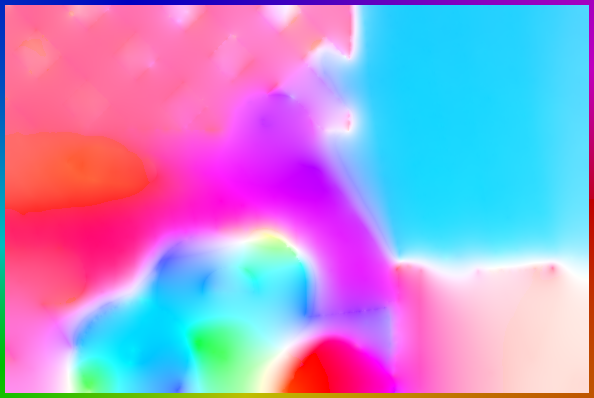}\\
	
	\includegraphics[height=.14\textheight,natwidth=584,natheight=388]{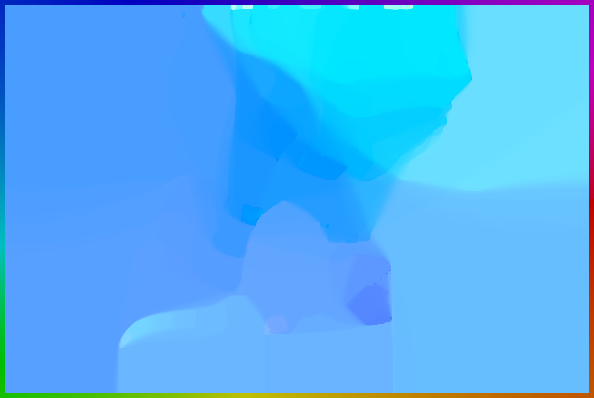}\enskip
	\includegraphics[height=.14\textheight,natwidth=584,natheight=388]{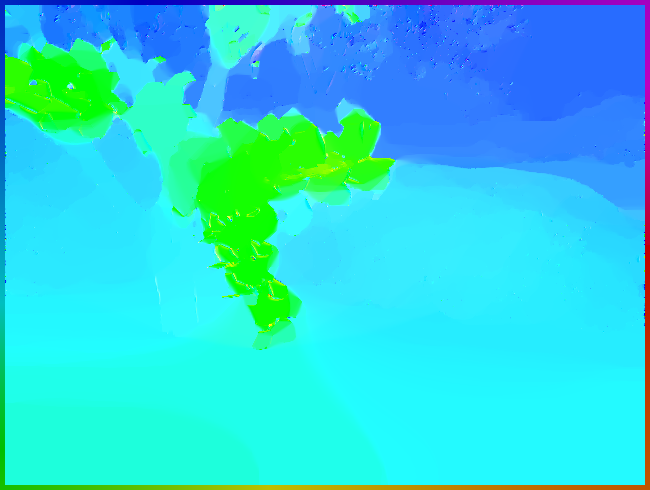}\enskip
	\includegraphics[height=.14\textheight,natwidth=584,natheight=388]{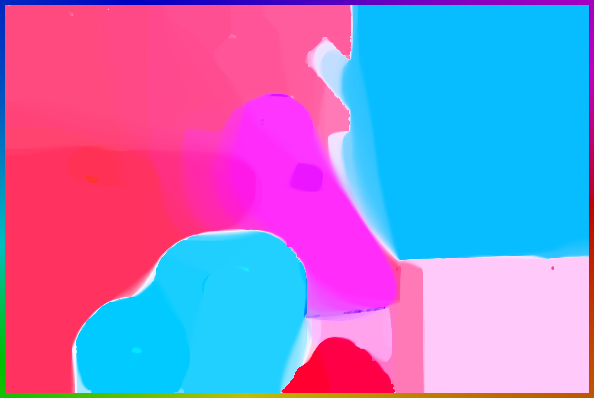}\\
	
	\includegraphics[height=.14\textheight,natwidth=584,natheight=388]{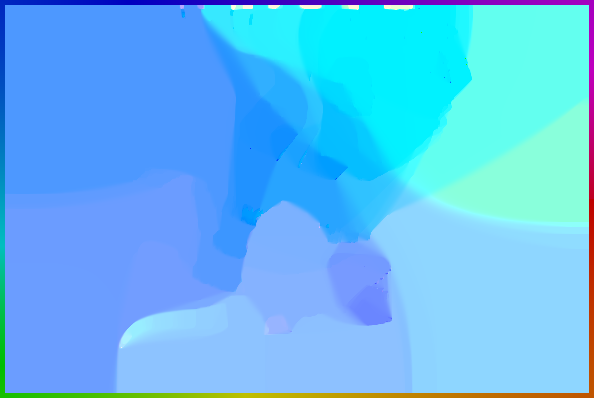}\enskip
	\includegraphics[height=.14\textheight,natwidth=584,natheight=388]{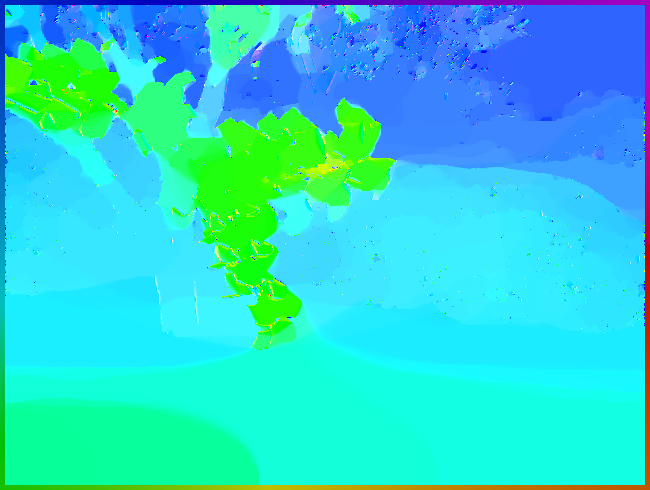}\enskip
	\includegraphics[height=.14\textheight,natwidth=584,natheight=388]{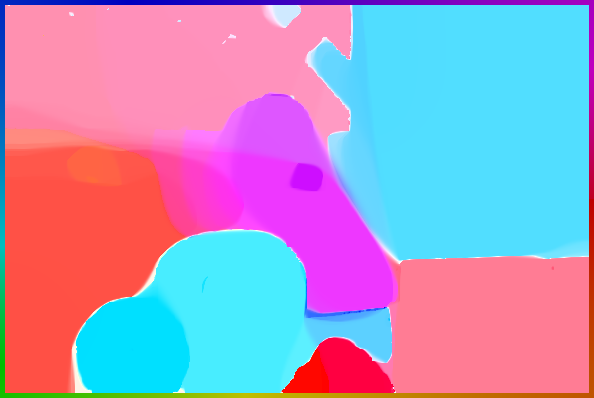}\\
	
	\includegraphics[height=.14\textheight,natwidth=584,natheight=388]{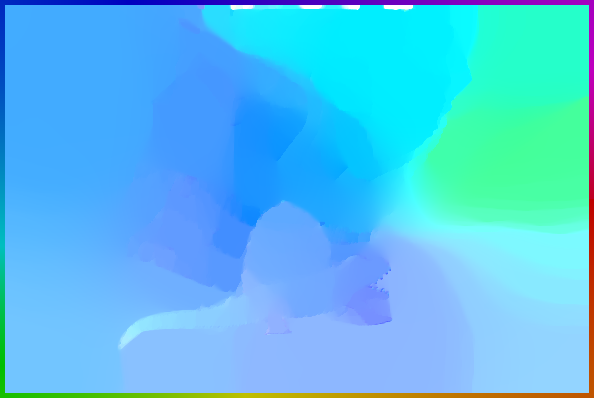}\enskip
	\includegraphics[height=.14\textheight,natwidth=584,natheight=388]{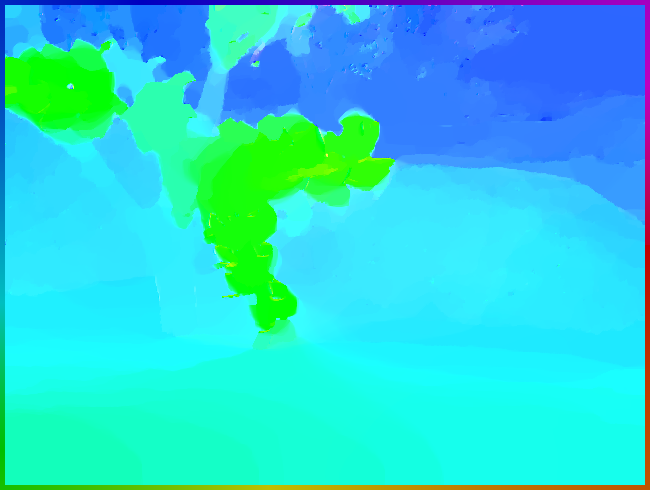}\enskip
	\includegraphics[height=.14\textheight,natwidth=584,natheight=388]{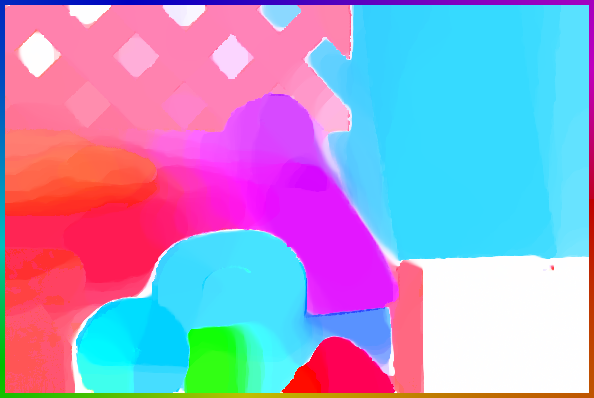}\\
	
	\includegraphics[height=.14\textheight,natwidth=584,natheight=388]{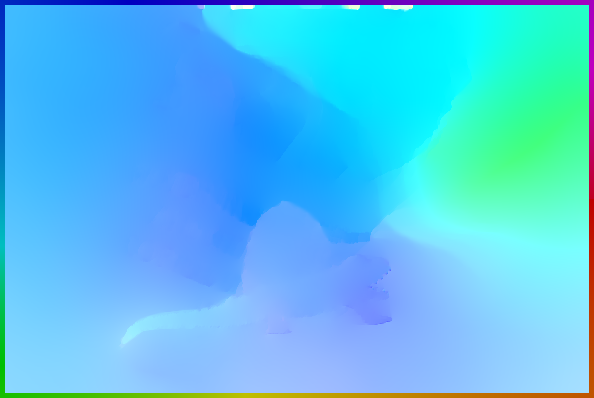}\enskip
	\includegraphics[height=.14\textheight,natwidth=584,natheight=388]{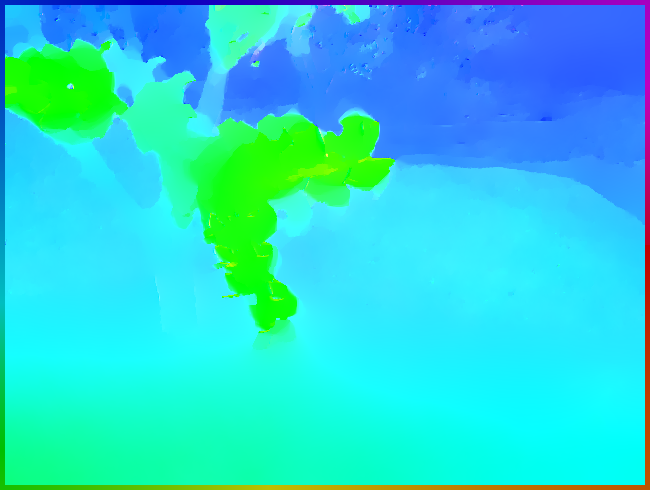}\enskip
	\includegraphics[height=.14\textheight,natwidth=584,natheight=388]{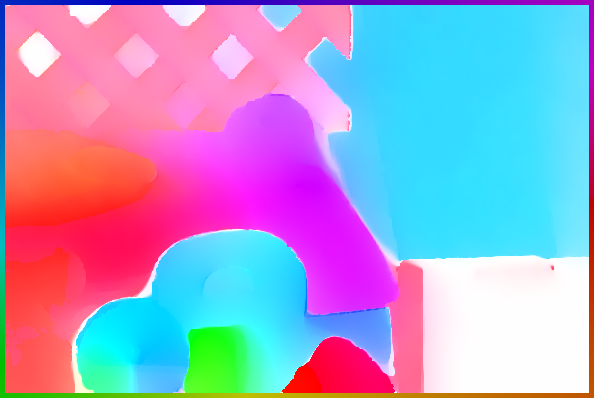}\\
	
	\includegraphics[height=.14\textheight,natwidth=584,natheight=388]{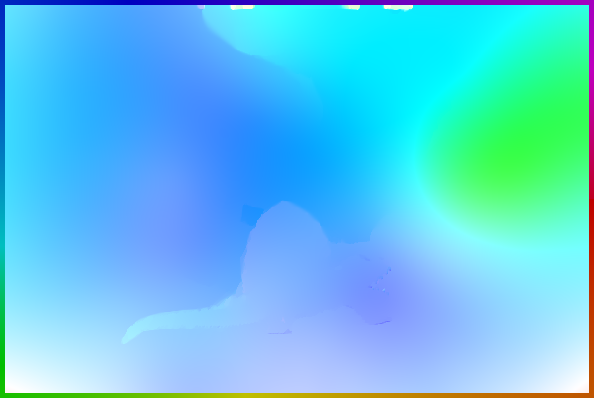}\enskip
	\includegraphics[height=.14\textheight,natwidth=584,natheight=388]{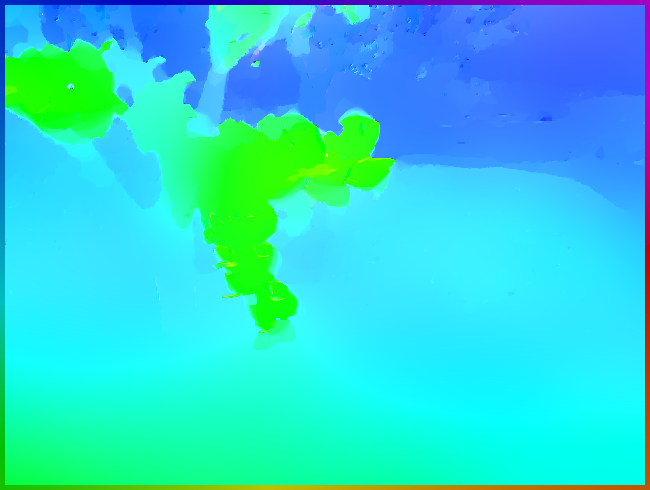}\enskip
	\includegraphics[height=.14\textheight,natwidth=584,natheight=388]{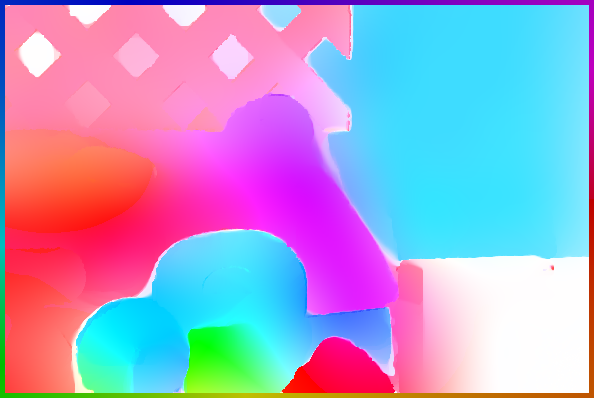}\\
	
	\caption{Overview of the results for Dimetrodon (left row), Grove2 (middle row) and Rubber Whale datasets (right row). Top to bottom: ground-truth, L$^2$-L$^2$, L$^2$-TV, L$^2$-TV Bregman, L$^1$-TV, L$^1$-TV/L$^2$, L$^1$-TV/TV}
	\label{figureResults2}
\end{figure}

\begin{table}
	\centering\footnotesize
	\begin{tabular}{|c|c|l|l|l|l|l|}
		\hline
		\multicolumn{3}{|c|}{} & \multicolumn{2}{c|}{Static parameters} & \multicolumn{2}{c|}{Dynamic parameters}\\
		\hline
		{Algorithm} & {Static $\alpha$} & {Static $\alpha_2$} & {$\varnothing AEE$} & {$\varnothing AE$}& {$\varnothing AEE$} & {$\varnothing AE$}\\
		\hline
		L$^2$-L$^2$ & 0.15 & - & 1.319 & 1.333 & 1.360 & 1.367\\
		\hline
		L$^2$-TV & 0.002 & - & 1.250 & 1.266 & 1.200 & 1.194\\
		\hline
		L$^2$-TV Breg & 0.02 & 10 & 1.345 & 1.286 & 1.200 & 1.171\\
		\hline
		L$^1$-TV & 0.1 & - &  1.135 & 1.137 & 1.213 & 1.212\\
		\hline
		L$^2$-TV/L$^2$ & 0.1 & 50 & 1.136 & 1.133 & 1.215 & 1.211\\
		\hline
		L$^2$-TV/TV & 0.1 & 1 & 1.237 & 1.205 & 1.341 & 1.315\\
		\hline
	\end{tabular}
	\caption{Table contains static parameters and averaged ranks in terms of AAE and AE. The average is calculated over AEE or AE of each result divided by the smallest AEE or AE of the particular dataset, i.e. $\varnothing AAE \mathrel{\mathop{\scriptsize:}} = \varnothing \left( \frac{AEE}{\min AEE} \right)$ and $\varnothing AE \mathrel{\mathop{\scriptsize:}} = \varnothing \left( \frac{AE}{\min \, AE} \right)$}
	\label{parametersAndRanks}
\end{table}

\begin{table}
	\centering\footnotesize
	\begin{tabular}{|c|c|l|l|l|l|l|}
		\hline
		{Error} & {Forward} & {Central} & {Interpolated}\\
		\hline
		AEE & 0.0515 & 0.0352 & 0.0221\\
		\hline
		AE & 0.0326 & 0.0207 & 0.0131\\
		\hline
	\end{tabular}
	\caption{Table contains evaluation for different approximations of the image gradient $\nabla u$ tested with the L$^1$-TV model on the Dimetrodon dataset with fixed parameter $\alpha=0.05$}
	\label{parametersAndRanks}
\end{table}

\section{Conclusion and Outlook}
The first result from the comparison of fixed versus variable parameters is that it only has small effect on the error and, consequently, the parameters do not have to be highly tuned for motion estimation. \\ 
Another important result from the evaluation is the effect of noise to the motion estimation process. Already a small level of Gaussian noise massively disturbs the motion estimation. In general, the models with L$^1$ data fidelity perform slightly better. Hence, images should be denoised beforehand or during the motion estimation process (see \cite{dirks}).\\
Taking into account the visual impression of Figure \ref{figureResults2} we notice that the grid structures in the background and the rotation on the lower left side are better reconstructed by the L$^1$ models. The L$^2$-L$^2$ model also seems to be able to detect the reconstruction, but due to the missing sharp edges the visual impression is poor. On the other hand, focusing on the constant moving block on the lower right side, the L$^2$-TV Bregman model reconstructs this part better than all the other models.

\subsection{Mass Preservation}
So far, the optical flow equation was used as data term in combination regularizers modeling different a-priori assumptions to the motion field $v$. Another possible approach to this problem is given by the continuity equation. Following the assumption that the total mass in the image data keeps constant in time, i.e. $\int_{\Omega}u(\cdot,t)dx=C\ \forall t$ and, furthermore, that mass is moved by a continuous flow $\boldsymbol{v}$ it can be shown that the flow satisfies the continuity equation $u_t+\nabla\cdot (vu)$. This equation, referring to the optical flow constraint often denoted as mass-preservation constraint, can be used as a data term in combination with each of the introduced regularizers.

\subsection{Higher Dimensions}
Nowadays, estimating the flow for higher dimensional image data is a current field of research. For example, microscopes record 3-dimensional data of moving cells and intracellular flows. Also in a medical context 3-dimensional motion estimation is of great importance, e.g. in modalities with canonical time resolution like PET and SPECT, or in MR imaging. It is e.g. possible to exactly map tumors of lung cancer patients despite of the respiratory motion based on (3d+t)-CT data. Unfortunately, the aperture problem becomes even worse in higher dimensions. Having in two spatial dimensions one equation (the optical flow constraint) for the two unknowns $v^1,v^2$ of the velocity field, we have to deal with three unknowns for 3 spatial dimensions. This missing information has to be caught by the regularizer in the variational model. Consequently, a-priori assumptions to the flow field have a even higher impact to the solution. For biological applications we would typically expect an incompressible flow (i.e. $\nabla\cdot v=0$), which can be used as an additional regularizer. For 3-dimensional computing of optical flow see e.g. \cite{barron2005tutorial}.

\subsection{Joint Models} 
Another interesting question in the context of motion estimation is robustness towards noise. It can be shown that already low levels of noise in the image data massively disturb the motion estimation process. Consequently, for many applications the image data gets denoised as a preprocessing step. An alternative approach is given by the joint motion estimation and image reconstruction models, which is even inevitable in the case of missing data, e.g. partly in space or time. For given image data $f$, we would typically minimize models of the following structure 
\begin{align*}
	\min_{u,v}\mathcal{D}(u,f) + \alpha\mathcal{R}(u) + \beta\mathcal{S}(v) + \gamma\mathcal{C}(u,v),
\end{align*}
where $\mathcal{D}(u,f)$ connects the unknown image data $u$ with $f$, $\mathcal{R}(u)$ (e.g. total variation) and $\mathcal{S}(v)$ (e.g. total variation or smoothness) act as regularizers for image data $u$ and velocity field $v$. The most important last part $\mathcal{C}(u,v)$ (optical flow or mass preservation constraint) connects image data and underlying motion field. One general benefit of such models is that the underlying motion is directly used for the image reconstruction, hence can be thought of as a motion compensation technique. On the other hand, the enhanced image data should generate more reasonable velocity fields (cf. \cite{dirks})

\subsection{Large Displacements}
\label{largeScaleOpticalFlow}
One main drawback of the previously introduced models is their limitation to flows of small magnitude. The reason for this lies in the optical flow constraint, where we assumed a small timestep $\delta_t$ in the modeling part. For large scale flows this assumption is simply not fulfilled anymore. Moreover, large errors in the discretization of the image gradients $u_x,u_y$ and $u_t$ are introduced when proceeding to longer timesteps.\\
A common approach to overcome these difficulties is to use a different linearization of the optical flow formulation $u_2(x+v)-u_1(x)=0$ in combination with a multiscale strategy. The brightness constancy assumption can be linearized using an approximation $v_0$ to $v$, yielding the following modified optical flow constraint: 
\begin{align*}
	\nabla u_2(x+v_0)\cdot (v-v_0) + u_2(x+v_0) - u_1(x) = 0.
\end{align*}
Acting as a data term in a variational model, approximations $v_0$ are obtained by performing the flow estimation on different downscaled versions of $u_1$ and $u_2$. Starting on a very coarse scale, an initial $v_0$ is calculated. This initial $v_0$ then acts as a approximation in the next finer scale. Following this so-called \textit{warping} strategy, we finally arrive at the original image. \\
It should be considered that it might be useful to adapt the regularization parameter in every warping step. Using the same regularization parameter independent of the size of the grid enhances the significance of a small error in an image, e.g. caused by noise, in a finer grid compared to its significance in a coarser grid. By reducing the influence of the regularization whilst refining the grid circumvents this mismatch.\\
A requirement for these multiscale approaches is that smaller structures move in a similar way to the larger ones, since the movements found in the coarse grids serve as an initial guess for the movements in finer grids. If small structures move completely independent of large ones, the flow field will not be detected correctly.\\
Recent approaches overcome these problems by using the idea of descriptor matching for optical flow. Descriptor matching is based on finding identical features in different images and matching them. This way it is possible to find extremely large displacements of discrete regions in between two images. However, since it is not possible to match every single pixel, this approach alone does not yield absolutely precise results. In \cite{brox2011large}, Brox and Malik combine descriptor matching with a variational approach to be able to estimate large displacements as well as precise results. Based on this work, Weinzaepfel at al. propose an algorithm for combined descriptor matching and variational motion estimation that also allows for non-rigid movements \cite{weinzaepfel2013deepflow}. In \cite{revaud2015epicflow}, Revaud et al. assume that edges in images match with edges in the flow field and incorporate an interpolation framework to the model of \cite{weinzaepfel2013deepflow} to avoid error-propagation caused by a warping. For evaluating large scale motion we may refer to the KITTI Vision Benchmark Suite \cite{Geiger2012CVPR,Menze2015CVPR}, which contains high-resolution real world datasets.

\bibliographystyle{plain}
\bibliography{citations}

\end{document}